\newcommand{\ce}{\mathbb{C}}
\newcommand{\er}{\mathbb{R}}
\newcommand{\R}{\mathbb{R}}
\newcommand{\N}{\mathbb{N}}
\newtheorem{Teorema}{Theorem}[section]
\newtheorem{Rem}[Teorema]{Remark}
\newtheorem{Prop}[Teorema]{Proposition}
\newtheorem{Lemma}[Teorema]{Lemma}
\DeclareMathOperator{\sech}{sech}
\title{On a coupled system of a Ginzburg-Landau equation with a quasilinear conservation law}
\author{Jo\~ao-Paulo Dias\thanks{\textit{jpdias@fc.ul.pt},  Departamento de Matem\'atica e CMAFCIO, Faculdade de Ci\^encias da Universidade de Lisboa,  Campo Grande, Edif\'icio C6, 1749-016 Lisboa (Portugal)}, Filipe Oliveira\thanks{\textit{foliveira@iseg.ulisboa.pt}, Departamento de Matem\'atica e CEMAPRE, ISEG, Universidade de Lisboa,
		Rua do Quelhas 6, 1200-781 Lisboa (Portugal)}, and Hugo Tavares\thanks{ \textit{hrtavares@ciencias.ulisboa.pt} Departamento de Matem\'atica e CMAFCIO, Faculdade de Ci\^encias da Universidade de Lisboa, Campo Grande, Edif\'icio C6, 1749-016 Lisboa (Portugal)} }
\begin{document}
\maketitle

\begin{abstract}
	\noindent
We study a coupled system of a complex Ginzburg--Landau equation with a quasilinear conservation law
\begin{equation*}
\left\{\begin{array}{rlll}
e^{-i\theta}u_t&=&u_{xx}-|u|^2u-\alpha g(v)u&\\ \\
v_t+(f(v))_x&=&\alpha (g'(v)|u|^2)_x&
\end{array}\right.
,\qquad x\in\er,\, t	\geq 0
\end{equation*}
which can describe the interaction between a laser beam and a fluid flow (see [Aranson, Kramer, Rev. Med. Phys. 74 (2002)]). We prove the existence  of a local in time strong solution for the associated Cauchy problem and, for a certain class of flux functions, the existence of global weak solutions. Furthermore we prove the existence of standing wave solutions of the form $(u(t,x),v(t,x))=(U(x),V(x))$ in several cases.
\end{abstract}
\noindent \textbf{Keywords:} Ginzburg-Landau, Conservation Laws, Standing Waves, Elliptic Problems, Variational Methods, Nehari Manifold.

 \bigskip

\noindent \textbf{Mathematics Subject Classification (2010):} 		35Q56, 35L65, 	35J20

\begin{section}{Introduction}

In  \cite{DF},  \cite{DFF} and \cite{DFO} we studied the well-posedness of several universal models describing the interaction between long and short waves. These unidimensional systems consist on the coupling between a nonlinear Sch\"odinger equation and a conservation law, and can be put in the general form
\begin{equation}
\left\{\begin{array}{rlll}
iu_t&=&u_{xx}-|u|^2u-\alpha g(v)u&\\ \\
v_t+(f(v))_x&=&\alpha (g'(v)|u|^2)_x&
\end{array}\right.
,\qquad x\in\er,\, t	\geq 0,
\end{equation}
where $f$ and $g$ are regular functions, $u$ is complex-valued (the transverse component of a field in complex notation) and $v$ is real-valued (a concentration). These models, originally derived by Benney (\cite{B}) in the case where $g$ is linear, have been successfully applied to several physical contexts. In water waves theory, applications include the interaction between gravity-capillary waves in a two-layer fluid, when the group
velocity of the surface waves coincides with the phase velocity of the internal waves (see \cite{a21}, \cite{a22}, \cite{a36}.)

In this paper we extend some results obtained in  \cite{DF},  \cite{DFF}, \cite{DFO} to the case where the Schr\"odinger equation is replaced by a cubic complex Ginzburg-Landau (CGL) equation. More precisely, for
\begin{equation}
\label{1.2}
\alpha>0\,\textrm{ and }\,-\frac {\pi}2<\theta<\frac{\pi}2,
\end{equation}
we consider the system
\begin{equation}
\label{1.3}
\left\{\begin{array}{rlll}
e^{-i\theta}u_t&=&u_{xx}-|u|^2u-\alpha g(v)u&(a)\\
\\
v_t+(f(v))_x&=&\alpha (g'(v)|u|^2)_x&(b)
\end{array}\right.
,\qquad x\in\er,\, t	\geq 0,
\end{equation}
for some initial data
\begin{equation}
\label{1.5}
u(0,x)=u_0(x),\quad v(0,x)=v_0(x),
\end{equation}
which can be used to model the interaction between a laser beam and a fluid flow (\cite{Aranson}).

\medbreak

\noindent
The first two sections of the present paper are devouted to the study of the Cauchy problem (\ref{1.3}-\ref{1.5}): in Section \ref{sec:2} we prove the following result concerning local strong solutions:
\begin{Teorema}[\bf Existence and uniqueness of local strong solutions]	\label{t2.1}
	Let $(u_0,v_0)\in H^3(\er)\times H^2(\er)$. Assume that $f\in C^3(\er)$, with $f(0)=0$, and $g$ is of the form
	$$g(v)=\pm v+\rho,\quad \rho\in\er.$$
	Then there exists $T>0$ and a unique strong solution
	$$(u,v)\in C^j([0,T];H^{3-2j}(\er))\times C^j([0,T];H^{2-j}(\er))\quad (j=0;1)$$
	to the Cauchy Problem \eqref{1.3}-\eqref{1.5}.
\end{Teorema}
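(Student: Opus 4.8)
The plan is to treat the two equations according to their different natures: equation (a) is, after multiplication by $e^{i\theta}$, a semilinear \emph{parabolic} equation, while equation (b) is a quasilinear \emph{hyperbolic} conservation law. First I would rewrite (a) as $u_t = e^{i\theta}u_{xx} - e^{i\theta}(|u|^2u+\alpha g(v)u)$ and observe that, since $-\frac{\pi}2<\theta<\frac{\pi}2$ forces $\cos\theta>0$, the operator $A:=-e^{i\theta}\partial_{xx}$ has symbol $e^{i\theta}\xi^2$ with strictly positive real part and hence generates an analytic semigroup $(e^{-tA})_{t\ge 0}$ on each $H^s(\er)$, with smoothing estimate $\|e^{-tA}\phi\|_{H^{s+\sigma}}\le C t^{-\sigma/2}\|\phi\|_{H^s}$ for $\sigma\ge 0$. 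A crucial simplification is that $g(v)=\pm v+\rho$ is affine, so $g'(v)=\pm 1$ is constant and the source in (b) collapses to $\pm\alpha(|u|^2)_x$, with no term involving $v_x$; this is precisely what makes the hyperbolic estimates close without an extra loss of regularity.

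Next I would set up a fixed point scheme on $C([0,T];H^3(\er))\times C([0,T];H^2(\er))$, defining a map $\Phi(u,v)=(\tilde u,\tilde v)$ where $\tilde u$ is the mild solution of the parabolic equation with the coefficient $g(v)$ frozen,
$$\tilde u(t)=e^{-tA}u_0-e^{i\theta}\int_0^t e^{-(t-s)A}\big(|u|^2u+\alpha g(v)u\big)(s)\,ds,$$
and $\tilde v$ solves the conservation law $v_t+(f(v))_x=\pm\alpha(|u|^2)_x$ with the source built from $u$. Using that $H^2(\er)$ and $H^3(\er)$ are Banach algebras in one dimension, together with the composition estimate for $f(v)$ (valid since $f\in C^3$ and $H^2\hookrightarrow L^\infty\cap C^1$), the forcing $|u|^2u+\alpha g(v)u$ lies in $H^2$ (only $H^2$, because of the $g(v)u$ factor) while $(|u|^2)_x$ lies in $H^2$. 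The parabolic smoothing then upgrades $\tilde u$ to $C([0,T];H^3)$, whereas Kato-type energy estimates for the quasilinear hyperbolic equation keep $\tilde v$ in $C([0,T];H^2)$.

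The core consists of the a priori bounds. For $\tilde u$ I would combine Duhamel with the smoothing inequality, integrating the singularity $(t-s)^{-1/2}$, to get $\|\tilde u\|_{C([0,T];H^3)}\le C(\|u_0\|_{H^3}+T^{1/2}P(R))$ on a ball of radius $R$; for $\tilde v$ I would apply $\partial_x^2$, pair with $\partial_x^2 v$, integrate by parts (the top-order term $\int f'(v)\partial_x^3 v\,\partial_x^2 v$ being controlled by $\|v_x\|_{L^\infty}\|\partial_x^2 v\|_{L^2}^2$), and invoke Gronwall to obtain $\|\tilde v\|_{C([0,T];H^2)}\le \|v_0\|_{H^2}e^{CT}+CT\sup_t\|(|u|^2)_x\|_{H^2}$. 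Choosing $R$ large and then $T$ small makes $\Phi$ stabilize a suitable ball.

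The main obstacle is the contraction step, because the quasilinear hyperbolic equation loses a derivative, so $\Phi$ cannot be expected to be Lipschitz in the top norm $H^3\times H^2$. I would therefore prove the contraction in the weaker norm $H^2\times H^1$ (or $H^1\times L^2$), estimating the differences $\tilde u_1-\tilde u_2$ and $\tilde v_1-\tilde v_2$ there while retaining the uniform high-norm bounds from the previous step; this is the standard device for closing such fixed point arguments for quasilinear systems, and it simultaneously delivers uniqueness. Finally, once $(u,v)\in C([0,T];H^3)\times C([0,T];H^2)$ is obtained, I would read off the time regularity directly from the equations: the right-hand side of (a) lies in $H^1$ and that of (b) lies in $H^1$, whence $u_t\in C([0,T];H^1)$ and $v_t\in C([0,T];H^1)$, which gives the claimed membership in $C^j([0,T];H^{3-2j})\times C^j([0,T];H^{2-j})$ for $j=0,1$.
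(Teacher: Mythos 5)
Your proposal is correct in outline, but it takes a genuinely different route from the paper's. The paper never iterates on the original unknowns: it sets $F=u_t$, differentiates the CGL equation in time and uses the conservation law to eliminate $v_t$, arriving at a system for $(F,v)$ whose nonlinearity has \emph{no derivative loss} (the function $u$ is recovered as $u_0+\int_0^t F(s,\cdot)\,ds$, and an auxiliary $\tilde u=(\partial_{xx}-1)^{-1}\bigl(|u|^2u+u(\alpha g(v)-1)+e^{-i\theta}F\bigr)$ stands in for $u$ wherever two extra derivatives are needed); this system is written as $U_t+A(U)U=h(t,U)$ and handled by Kato's abstract quasilinear theory, after which the original equation is recovered by checking that $e^{-i\theta}u_t-u_{xx}+|u|^2u+\alpha g(v)u$ is constant in time and vanishes at $t=0$ by the choice of $F_0$. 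You stay instead with $(u,v)$ and absorb the one-derivative deficit of the forcing ($g(v)u$ is only $H^2$ when $v\in H^2$) through the analytic smoothing of the semigroup generated by $e^{i\theta}\partial_{xx}$, closing with hyperbolic energy estimates for $v$ and a contraction in a weaker norm. Both arguments rely essentially on $g$ being affine ($g'\equiv\pm1$), but yours additionally exploits parabolicity, $|\theta|<\pi/2$, via the smoothing estimate, whereas the paper's substitution trick is inherited from the Schr\"odinger case \cite{DFO}, where no such smoothing exists. What your route buys is a more elementary, self-contained proof (no abstract Kato theorem, no reconstruction step), with uniqueness falling out of a low-norm Gronwall estimate; what the paper's route buys is independence from parabolic smoothing and the $C^1$-in-time regularity essentially for free, since the new unknown is $u_t$ itself.

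Two details you should nail down, both routine. First, as stated, each application of your map $\Phi$ requires an exact local solution of the quasilinear conservation law, i.e. a classical existence theorem as input; freezing the flux coefficient and solving the linear transport equation $\tilde v_t+f'(v)\tilde v_x=\pm\alpha(|u|^2)_x$ avoids this. Second, the ball of $C([0,T];H^3(\er))\times C([0,T];H^2(\er))$ is \emph{not} complete under your weak metric, so the fixed point is a priori only bounded in the top norms and weakly continuous in time; strong top-norm continuity must be recovered a posteriori --- immediate for $u$ from the Duhamel formula with the $(t-s)^{-1/2}$ smoothing, and by the standard energy-norm/weak-continuity argument for $v$. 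Neither issue invalidates the plan.
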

\noindent
Moreover, in Section 3, we study the global existence of weak solutions to the Cauchy problem for a wider class of flux functions:
\begin{Teorema}[\bf Existence of global weak solutions]
	\label{T4.1}
	Let $(u_0,v_0)\in (H^1(\er))^2$. Assume that $g\in C^3(\er)\cap W^{3,\infty}(\er)$ with $g\geq 0$ and $g^{(3)}$ not affine in any open set. Moreover, assume that $f$ is of the form
	$$f(v)=av^2-bv^3,\quad a,b>0.$$
	Then, there exists a constant $\alpha_0>0$, and, for each $0<\alpha\leq\alpha_0$, functions $u$ and $v$ with 
	\begin{displaymath}
	\begin{array}{lllll}
	u\in L_{loc}^{\infty}([0,+\infty[;H^1(\er))\cap C([0,+\infty[;L^2(\er)),\\
	\\
	u_t\in L_{loc}^2([0,+\infty[;L^2(\er)),\,u(0,\cdot)=u_0(\cdot)
	\end{array}\end{displaymath}
	and
	$$v\in L_{loc}^{\infty}([0,+\infty[;(L^2\cap L^4)(\er))$$
	such that, for all $\phi\in C_0^1(\er\times ]0,+\infty[), \ce)$ and $\psi\in C_0^1(\er\times [0,+\infty[), \er)$,
	\begin{multline*}
	\iint_{\er\times[0,+\infty[}e^{-i\theta}u_t\phi dxdt+\iint_{\er\times[0,+\infty[}u_x\phi_xdxdt\\
	+\iint_{\er\times[0,+\infty[}|u|^2u\phi dxdt+\alpha \iint_{\er\times[0,+\infty[}g(v)u\phi dxdt=0
	\end{multline*}
	and
	\begin{multline*}
	\iint_{\er\times[0,+\infty[}v\psi_t dxdt+\iint_{\er\times[0,+\infty[}f(v)\psi_x dxdt+\int_{\er}v_0(x)\psi(x,0)dx\\
	+\alpha\iint_{\er\times[0,+\infty[}(g'(v)|u|^2)_x\psi dxdt=0.
	\end{multline*}
\end{Teorema}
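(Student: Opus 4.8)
The plan is to obtain $(u,v)$ as a limit of solutions to a parabolic regularization and then to pass to the limit in the weak formulation. First I would regularize the hyperbolic equation $(b)$ by adding a viscosity term, considering for each $\e>0$ the system in which $(b)$ is replaced by $v_t+(f(v))_x=\alpha(g'(v)|u|^2)_x+\e v_{xx}$, together with (possibly mollified) initial data in $H^1\times H^1$. Since the Ginzburg--Landau equation $(a)$ is itself of parabolic type for $-\pi/2<\theta<\pi/2$ (the factor $e^{i\theta}$ has positive real part, $\cos\theta>0$), and the regularized equation for $v$ is now uniformly parabolic, I expect to produce, for each fixed $\e$, a global smooth solution $(u_\e,v_\e)$ by a fixed-point/semigroup argument combined with the a priori bounds below.

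Second, I would derive a priori estimates uniform in $\e$. Multiplying $(a)$ by $\bar u_\e$ and taking real parts, the dissipativity $\cos\theta>0$ together with $g\ge 0$ yields an $L^\infty([0,T];L^2)$ bound on $u_\e$ and an $L^2([0,T];H^1)$ control of $u_{\e,x}$; testing $(a)$ with $\bar u_{\e,t}$ produces the energy identity controlling $\|u_{\e,x}\|_{L^2}$, $\|u_\e\|_{L^4}$ and $\int_0^T\|u_{\e,t}\|_{L^2}^2$, where the coupling term $\tfrac{\alpha}{2}\int g'(v_\e)\,\partial_t v_\e\,|u_\e|^2$ is rewritten through equation $(b)$ and absorbed using the smallness hypothesis $0<\alpha\le\alpha_0$ --- this is exactly where $\alpha_0$ enters. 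For $v_\e$, testing $(b)$ with $v_\e$ and with $v_\e^3$, and using that the flux contributions integrate to zero (e.g. $\int(f(v_\e))_x v_\e\,dx=0$ and the analogous cancellation with $v_\e^3$), gives an $L^\infty([0,T];L^2\cap L^4)$ bound on $v_\e$ and a bound on $\sqrt{\e}\,v_{\e,x}$ in $L^2$; the cubic shape $f(v)=av^2-bv^3$ is precisely what makes the $L^4$ estimate close.

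Third, I would extract compactness. From the uniform bounds ($u_\e$ in $L^\infty H^1$ and $u_{\e,t}$ in $L^2L^2$) and the Aubin--Lions--Simon lemma, $u_\e\to u$ strongly in $L^2_{loc}$, hence, using the one-dimensional embedding $H^1\hookrightarrow L^\infty$, $|u_\e|^2\to|u|^2$ strongly in $L^p_{loc}$; this disposes of every $u$-nonlinearity. The real difficulty is the a.e.\ convergence of $v_\e$, which is needed to pass to the limit in the nonlinear terms $f(v_\e)$ and $g'(v_\e)|u_\e|^2$. I would view $(b)$ as the scalar conservation law $\partial_t v_\e+\partial_x\big(f(v_\e)-\alpha g'(v_\e)|u_\e|^2\big)=\e v_{\e,xx}$ and apply compensated compactness: for entropy pairs $(\eta,q)$ the entropy production $\partial_t\eta(v_\e)+\partial_x q_\e$ splits, thanks to $\sqrt\e\,v_{\e,x}\in L^2$, into a term tending to $0$ in $H^{-1}_{loc}$ and a term bounded in measures, so by Murat's lemma it is precompact in $H^{-1}_{loc}$. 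Tartar's commutation relation then constrains the associated Young measure $\nu_{x,t}$; freezing $|u(x,t)|^2=c$ at a Lebesgue point, the effective flux $v\mapsto f(v)-\alpha c\,g'(v)$ has second derivative $2a-6bv-\alpha c\,g^{(3)}(v)$, which by the hypothesis that $g^{(3)}$ is affine on no open set cannot vanish on any interval. This genuine nonlinearity forces $\nu_{x,t}$ to be a Dirac mass, hence $v_\e\to v$ a.e.\ and in $L^p_{loc}$.

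Finally, with $u_\e\to u$ and $v_\e\to v$ strongly enough, I would pass to the limit in each term of the regularized weak formulation: all nonlinear terms converge by the strong convergences, the viscous contribution $\e\iint v_{\e,x}\psi_x$ vanishes since $\sqrt\e\,v_{\e,x}$ is bounded, and the datum $v_0$ is recovered from the boundary term at $t=0$ obtained by integrating $v_t\psi$ by parts, while $u(0)=u_0$ follows from $u\in C([0,+\infty[;L^2)$ by weak continuity. The regularity classes are identified by lower semicontinuity of the norms under the weak limits. I expect the compensated-compactness step --- verifying $H^{-1}_{loc}$ precompactness of the entropy production under only $L^2\cap L^4$ (rather than $L^\infty$) bounds on $v_\e$, with entropy pairs of controlled growth, and carrying out the localization in the $(x,t)$-dependent flux --- to be the main obstacle.
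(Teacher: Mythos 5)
Your proposal follows the paper's proof almost step for step: vanishing viscosity, $\epsilon$-uniform a priori bounds in which the smallness $\alpha\le\alpha_0$ absorbs the coupling term of the $\overline{u}_t$-energy identity, Aubin--Lions compactness for $u$, and the $L^p$ (Schonbek) version of compensated compactness, with genuine nonlinearity of the frozen flux $v\mapsto f(v)-\alpha c\,g'(v)$ coming exactly from the hypothesis that $g^{(3)}$ is affine on no open set. There is, however, one step that fails as you state it: the $L^\infty_t(L^4_x)$ bound on $v_\epsilon$ obtained by testing the regularized equation (b) with $v_\epsilon^3$. The cancellation you invoke handles only the pure flux term, $\int (f(v_\epsilon))_x v_\epsilon^3\,dx=0$; the coupling term $\alpha\int (g'(v_\epsilon)|u_\epsilon|^2)_x v_\epsilon^3\,dx=-3\alpha\int g'(v_\epsilon)|u_\epsilon|^2 v_\epsilon^2 \partial_x v_\epsilon\,dx$ neither cancels nor is routinely absorbed: estimated directly it requires $\|v_\epsilon\|_{L^6}^3\|\partial_x u_\epsilon\|_{L^2}$ (after one integration by parts) or a bound on $\|\partial_x v_\epsilon\|_{L^2}$ without the $\sqrt{\epsilon}$ weight, and any absorption into $\epsilon\int (\partial_x v_\epsilon)^2$ costs negative powers of $\epsilon$, destroying uniformity. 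It can be repaired, but only with an extra idea, e.g.\ writing $3\int g'(v_\epsilon)v_\epsilon^2(\partial_x v_\epsilon)|u_\epsilon|^2 = \int \partial_x\left[\Xi(v_\epsilon)\right]|u_\epsilon|^2$ with $\Xi(s)=s^2g(s)-2\int_0^s \sigma g(\sigma)\,d\sigma$, so that $|\Xi(s)|\le 2\|g\|_\infty s^2$ and the term is bounded by $C\alpha\|u_\epsilon\|_\infty\|\partial_xu_\epsilon\|_2\|v_\epsilon\|_4^2$ --- note this exploits $g\in L^\infty$, not merely $g'\in L^\infty$. (A similar caveat applies to your $v_\epsilon$ test: there the coupling term becomes $2\alpha\Re\int g(v_\epsilon)u_\epsilon\overline{\partial_x u_\epsilon}$, which the paper controls by substituting $\alpha g(v_\epsilon)u_\epsilon=\partial_{xx}u_\epsilon-|u_\epsilon|^2u_\epsilon-e^{-i\theta}\partial_t u_\epsilon$ from equation (a), leaving only $-2\Re\int e^{-i\theta}\partial_t u_\epsilon\overline{\partial_x u_\epsilon}$ after exact derivatives drop out.)

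The paper's own route to the $L^4$ bound avoids this entirely and is already contained in the energy identity you derived: once $\frac{\alpha}{2}\int g'(v_\epsilon)|u_\epsilon|^2\partial_t v_\epsilon$ is rewritten through equation (b), the quantity whose time derivative is controlled is $\int\bigl(|\partial_x u_\epsilon|^2+\frac12|u_\epsilon|^4+\alpha g(v_\epsilon)|u_\epsilon|^2-F(v_\epsilon)\bigr)$ with $F(v)=\int_0^v f(\xi)\,d\xi=\frac a3 v^3-\frac b4 v^4$, and the cubic shape of $f$ enters precisely through the coercivity $-F(v)\ge c_1v^4-c_0v^2$: the $L^4$ bound then drops out of the same Gronwall argument that closes the estimates for $\|\partial_x u_\epsilon\|_2$ and $\int_0^t\|\partial_t u_\epsilon\|_2^2$, with the $v^2$ term supplied by the $v_\epsilon$ test. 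Everything else in your outline --- entropies of compact support to cope with $v\in L^2\cap L^4$ only, Murat's lemma, Tartar's commutation and the Dirac reduction of the Young measure at frozen $|u|^2$, and the final passage to the limit --- coincides with the paper's argument.
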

 
This result will be obtained by applying the $L^p$ version of the compensated compactness method of F. Murat and L. Tartar (cf.\cite{T}) introduced by M.\-E. Schonbek (cf.\cite{MS}) and the vanishing viscosity method to the  approximating system ($\epsilon>0$) with the same initial data
\begin{equation}
\label{1.4}
\left\{\begin{array}{rlll}
e^{-i\theta}u^{\epsilon}_t&=&u^{\epsilon}_{xx}-|u^{\epsilon}|^2u^{\epsilon}-\alpha g(v^{\epsilon})u^{\epsilon}&(a)\\
\\
v^{\epsilon}_t+(f(v^{\epsilon}))_x&=&\alpha (g'(v^{\epsilon})|u^{\epsilon}|^2)_x+\epsilon v^{\epsilon}_{xx}&(b)
\end{array}\right.
,\quad x\in\er,\, t	\geq 0,
\end{equation}

\medbreak

In the second part of the paper (Section \ref{sec:4}), we study the existence of standing wave solutions  for  $ g(v) = v + \rho , \rho > 0 $ (which does not satisfy (6)) and $f(s)=as^2-bs^3$. More precisely, we will look for standing waves of the form $u(t,x)=U(x)$, $V(t,x)=V(x)$, with $U,V$ real solutions of
\begin{equation}
\label{eq:boundstates1}
\begin{cases}
U''-U^3-\alpha (V+\rho)U=0 \\ 
(aV^2-bV^3)'=\alpha (U^2)' 
\end{cases} \qquad \text{ in }  \R
\end{equation}

We denote by $H_{rd}^1(\R)$ the set of functions in $H^1(\R)$ which are even and decreasing in $|x|$.

\begin{Teorema}[\bf Existence of Bound States: focusing case]\label{teo:GS1}
Take $\rho,\alpha>0$. Then:
\begin{enumerate}
\item Assume that $a=0$. Then there exists $b>0$ such that \eqref{eq:boundstates1} admits a solution $(U,V)$, with
\[
U\in H_{rd}^1(\R) \text{ positive}, \qquad V(x)=-\left(\frac{\alpha}{b}\right)^{1/3} U^{2/3}(x).
\]
\item  Assume that $b=0$. Then there exists $a>0$ such that \eqref{eq:boundstates1} admits a solution $(U,V)$, with
\[
U\in H_{rd}^1(\R) \text{ positive}, \qquad V(x)=-\left(\frac{\alpha}{a}\right)^{1/2} U(x).
\]
\item There exists $a,b>0$ such that \eqref{eq:boundstates1} admits a solution $(U,V)$ with $U>0$ and $U\in H^1_{rd}(\R)$, and $V<0$ with $aV^2-bV^3=\alpha U^2$ a.e..
\end{enumerate}
\end{Teorema}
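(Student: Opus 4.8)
The plan is to collapse the system onto a single scalar profile equation for $U$ and then produce the bump by a constrained variational argument, using the freedom in the flux coefficients to fix the Lagrange multiplier.

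First I would integrate the second equation. For standing waves the conservation law reads $(aV^2-bV^3)'=\alpha(U^2)'$, hence $aV^2-bV^3=\alpha U^2+C$ on $\R$; imposing decay at infinity (both $U,V\to0$) forces $C=0$ and gives the pointwise constraint $aV^2-bV^3=\alpha U^2$. On the branch $V\le 0$ the map $V\mapsto aV^2-bV^3$ is a decreasing bijection from $(-\infty,0]$ onto $[0,+\infty)$, so $V$ is recovered as a function of $U$: explicitly $V=-(\alpha/b)^{1/3}U^{2/3}$ when $a=0$, $V=-(\alpha/a)^{1/2}U$ when $b=0$, and implicitly in the general case. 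Substituting into the first equation yields the scalar equation $-U''+\alpha\rho U+U^3=N(U)$ on $\R$, where the focusing term is $N(U)=\alpha^{4/3}b^{-1/3}U^{5/3}$ in case (1), $N(U)=\alpha^{3/2}a^{-1/2}U^2$ in case (2), and $N(U)=-\alpha V(U)U$ (with the implicit $V(U)<0$) in case (3); note that in the last case $N(U)\sim\alpha^{3/2}a^{-1/2}U^2$ as $U\to0^+$ and $N(U)\sim\alpha^{4/3}b^{-1/3}U^{5/3}$ as $U\to+\infty$.

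For cases (1) and (2) I would obtain $U$ as a minimizer of the coercive functional $E(U)=\tfrac12\int_\R(|U'|^2+\alpha\rho U^2)\,dx+\tfrac14\int_\R U^4\,dx$ on the constraint manifold $\{U\in H_{rd}^1(\R):\int_\R|U|^{q}\,dx=1\}$, with $q=8/3$ in case (1) and $q=3$ in case (2). Restricting to even, non-increasing functions restores the compactness of $H_{rd}^1(\R)\hookrightarrow L^{q}(\R)$ for these subcritical exponents, so a minimizing sequence (bounded by coercivity) converges weakly in $H^1$ and strongly in $L^{q}$, and weak lower semicontinuity of $E$ yields a minimizer. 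Replacing $U$ by $|U|$ and symmetrizing (P\'olya--Szeg\H{o}) we may take $U\ge0$, even and non-increasing, and $U$ is in fact strictly positive with exponential decay since the linear term $\alpha\rho U$ dominates $N(U)$ as $U\to0^+$. The Euler--Lagrange equation is $-U''+\alpha\rho U+U^3=\lambda\,U^{q-1}$ for a Lagrange multiplier $\lambda$, and testing with $U$ shows $\lambda>0$. Since $q-1=5/3$ (resp.\ $q-1=2$), comparing $\lambda$ with the prescribed coefficient $\alpha^{4/3}b^{-1/3}$ (resp.\ $\alpha^{3/2}a^{-1/2}$) \emph{determines} an admissible $b>0$ (resp.\ $a>0$); unravelling $V=-(\alpha/b)^{1/3}U^{2/3}$ (resp.\ $V=-(\alpha/a)^{1/2}U$) then gives the stated pair $(U,V)$. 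For case (3), where $N$ is only implicit, I would instead argue on the phase plane: the profile equation is conservative, $U''=G'(U)$ with potential $G(U)=\tfrac{\alpha\rho}2U^2+\tfrac14U^4+\alpha\int_0^U V(s)\,s\,ds$, and a positive, even, exponentially decaying solution corresponds to a homoclinic orbit to the hyperbolic saddle at the origin, i.e.\ to the zero-energy trajectory $\tfrac12(U')^2=G(U)$. Such an orbit exists exactly when $G$ has a positive hump terminated by a first simple zero $U_1>0$ with $G'(U_1)<0$; since $G''(0)=\alpha\rho>0$ while the negative contribution $\alpha\int_0^U V(s)\,s\,ds\sim-\tfrac13\alpha^{3/2}a^{-1/2}U^3$ for moderate $U$, choosing $a$ small enough forces $G$ below zero and produces $U_1$. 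The resulting orbit is automatically even about its peak, positive, exponentially decaying, hence in $H_{rd}^1(\R)$, and the recovered $V<0$ satisfies $aV^2-bV^3=\alpha U^2$ a.e. (the same energy argument also reproves (1)--(2)).

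The main obstacle is the competing-sign nonlinearity: the defocusing cubic $U^3$ together with a focusing subquartic term makes the natural action functional $J(U)=\tfrac12\int_\R(|U'|^2+\alpha\rho U^2)\,dx-\int_\R\mathcal N(U)\,dx$ (with $\mathcal N'=N$) unbounded below, since wide low plateaus send $J\to-\infty$; thus one cannot minimize $J$ directly and a standard Nehari or mountain-pass geometry is not available off the shelf. The device of minimizing only the positive part $E$ under a power constraint circumvents this, at the price that the focusing coefficient is no longer prescribed but emerges as the multiplier $\lambda$ — which is precisely why the theorem asserts existence of \emph{some} admissible $a,b$ rather than all. The remaining delicate points are the recovery of compactness through the even-decreasing subspace, the verification that $\lambda>0$ (equivalently, the potential-well and turning-point conditions in the phase-plane formulation), and, in case (3), the control of the implicitly defined $V(U)$ and its primitive near $U=0$ and at infinity.
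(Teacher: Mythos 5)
Your treatment of parts (1) and (2) is essentially identical to the paper's: the same functional $J(U)=\frac12\int((U')^2+\alpha\rho U^2)+\frac14\int U^4$, the same constraints $\int|U|^{8/3}=1$ and $\int|U|^{3}=1$, symmetrization plus the compact embedding $H^1_{rd}(\R)\hookrightarrow L^q(\R)$, positivity of the Lagrange multiplier by testing with $U$, and then solving $\alpha^{4/3}b^{-1/3}=\lambda$ (resp. $\alpha^{3/2}a^{-1/2}=\lambda$) for $b$ (resp. $a$). For part (3), however, you take a genuinely different route (phase plane / homoclinic orbit) from the paper, which instead minimizes the same $J$ on $\mathcal{M}_3=\{U:\int G(\alpha U^2)=-1\}$, where $G$ is the primitive of the inverse of the negative branch of $s\mapsto s^2-s^3$, and then absorbs the multiplier $\lambda>0$ into the coefficients via $V:=\lambda g(\alpha U^2)$, $a:=\lambda^{-2}$, $b:=\lambda^{-3}$; this is how the paper obtains \emph{some} $a,b>0$ without ever tuning parameters.

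Your phase-plane argument for part (3) has a genuine gap in the parameter analysis. The expansion $\alpha\int_0^U V(s)s\,ds\sim-\frac13\alpha^{3/2}a^{-1/2}U^3$ is valid only while $aV^2$ dominates $-bV^3$, i.e. for $U\lesssim a^{3/2}/(b\sqrt{\alpha})$, a window that shrinks to $\{0\}$ as $a\to0$ with $b$ fixed; outside it $V(s)\approx -(\alpha/b)^{1/3}s^{2/3}$ and the negative contribution to $G$ saturates at roughly $-\frac38\alpha^{4/3}b^{-1/3}U^{8/3}$, \emph{independently of} $a$. In fact, differentiating $aV^2-bV^3=\alpha s^2$ gives $\partial_a V>0$, so $G=G_a$ is pointwise increasing in $a$ and $G_a\geq G_0$, where $G_0(U)=\frac{\alpha\rho}{2}U^2+\frac14 U^4-\frac38\alpha^{4/3}b^{-1/3}U^{8/3}$. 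Since $\min_{U>0}\bigl(\frac{\alpha\rho}{2}U^{-2/3}+\frac14 U^{4/3}\bigr)=\frac34(\alpha\rho)^{2/3}$, one has $G_a\geq G_0>0$ on $(0,\infty)$ for \emph{every} $a>0$ whenever $b>\alpha^2/(8\rho^2)$; for such $b$ there is no first zero $U_1$, hence no homoclinic and no solution with $U>0>V$ (energy conservation forces $G_a(U_{\max})=0$ at the peak), no matter how small $a$ is. So "choosing $a$ small enough" is not the right mechanism: the dip of $G$ below zero is governed by the smallness of $b$ (e.g. $b<\alpha^2/(8\rho^2)$), after which small $a$ is harmless. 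A second, smaller, gap is that you assert but do not prove that the first zero is simple ($G'(U_1)<0$); a degenerate first zero is an equilibrium of $U''=G'(U)$, the connecting orbit is then a front rather than a bump, and this case must be excluded (e.g. using that $G$ is real-analytic in $U$ on $(0,\infty)$ and strictly monotone in $a$, via a Sard-type argument). Both repairs are available because the statement lets you choose both $a$ and $b$, but as written the key step of part (3) fails.
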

\begin{Rem}\label{rem_V>0} Observe that, since $\int \left((U')^2 +\alpha(V+\rho)U^2 +U^4\right)=0$, there are no solutions $(U,V)$ with $V$ positive and $\rho>0$ in the focusing case.
\end{Rem}
\smallbreak

We obtain stronger results in focusing case  (which cannot be considered in the first part of the paper), that is, we prove existence of real solutions of
\begin{equation}
\label{eq:boundstates3}
\begin{cases}
U''+U^3-\alpha (V+\rho)U=0 \\
(aV^2-bV^3)'=\alpha (U^2)'.
\end{cases}
\end{equation}

\begin{Teorema}[\bf Existence of Bound States: defocusing case]\label{teo:GS2}
Take $\rho>0$.
\begin{enumerate}
\item Let $\alpha>0$ and assume that $a=0$ and $b>0$. Then \eqref{eq:boundstates3} admits a solution $(U,V)$, with
\[
U\in H_{rd}^1(\R) \text{ positive}, \quad V(x)=-\left(\frac{\alpha}{b}\right)^{1/3} U^{2/3}(x).
\]
\item  Let $\alpha>0$ and assume that $a>0$ and $b=0$. Then \eqref{eq:boundstates3} admits two solutions $(U_1,V_1)$ and $(U_2,V_2)$, with
\[
U_1\in H_{rd}^1(\R) \text{ positive},   \qquad V_1(x)=-\left(\frac{\alpha}{a}\right)^{1/2} U_1(x).
\]
and
\[
U_2\in H_{rd}^1(\R) \text{ positive},  \qquad  V_2(x)=\left(\frac{\alpha}{a}\right)^{1/2} U_2(x).
\]
\item Let $a,b>0$. Then, for sufficiently small $\alpha>0$, \eqref{eq:boundstates3} admits two pairs of solutions $(U_1,V_1)$ and $(U_2,V_2)$, with
\[
U_1>0, \ V_1>0 \qquad \text{ and } \qquad U_2>0,\   V_2<0;
\]
$U_i\in H^1(\R)$ and $aV_i^2-bV_i^3=\alpha U_i^2$ for $i=1,2$.
\end{enumerate}
\end{Teorema}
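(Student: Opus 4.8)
\noindent
\textbf{The plan is} to eliminate $V$ via the conservation law. Any solution $(U,V)$ of \eqref{eq:boundstates3} decaying at infinity satisfies, after integrating the second equation once and using that $aV^2-bV^3$ and $\alpha U^2$ both vanish at $\pm\infty$ (so the constant of integration is zero), the pointwise relation $aV^2-bV^3=\alpha U^2$ on $\R$. Setting $\omega:=\alpha\rho>0$, the first equation becomes the \emph{autonomous scalar} problem
\[
-U''+\omega U=U^3-\alpha V\,U=:h(U),
\]
once $V$ is written as a function of $U$ through the cubic $p(V):=aV^2-bV^3=\alpha U^2$. The whole theorem thus reduces to producing a positive homoclinic (ground state) of $-U''+\omega U=h(U)$ on the line for the correct branch $V=V(U)$. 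I would first dissect $p$: on $(-\infty,0)$ it decreases from $+\infty$ to $0$, so $p(V)=\alpha U^2$ has for every $U\ge0$ a unique root $V_-(U)\le0$; on $(0,2a/3b)$ it increases from $0$ to its maximum $p_{\max}=4a^3/27b^2$, giving a unique root $V_+(U)\in(0,2a/3b)$ whenever $\alpha U^2<p_{\max}$. By the implicit function theorem (away from the fold $V=2a/3b$) both branches are smooth in $U^2$, vanish as $U\to0$, and satisfy $V_\pm(U)=\pm(\alpha/a)^{1/2}U+O(U^2)$. This is precisely the algebraic content of the three cases: $a=0$ forces the single branch $V=-(\alpha/b)^{1/3}U^{2/3}$, $b=0$ gives the two exact branches $V=\pm(\alpha/a)^{1/2}U$, and $a,b>0$ gives the pair $V_\pm$, the positive one constrained by the threshold $\alpha U^2<p_{\max}$.

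\noindent
\textbf{Cases 1 and 2.} For $a=0$ the reduced nonlinearity is $h(U)=U^3+\alpha(\alpha/b)^{1/3}U^{5/3}$, a sum of superlinear, $H^1$-subcritical, positive powers; the positive even decreasing ground state $U\in H^1_{rd}(\R)$ then follows from a standard variational scheme---minimization of $J(U)=\tfrac12\int((U')^2+\omega U^2)-\int H(U)$ (with $H'=h$) on the Nehari manifold, Schwarz symmetrization to land in $H^1_{rd}(\R)$, and the maximum principle for positivity---or, equivalently, from the phase-plane fact that $\Phi(U)=\int_0^U(h(s)-\omega s)\,ds$ is negative near $0$ and has a positive zero, which yields the homoclinic orbit. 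For $b=0$ the two branches give $h(U)=U^3\mp dU^2$ with $d=\alpha^{3/2}a^{-1/2}$; the $V_-$ branch ($+dU^2$) is again purely superlinear and handled identically, while the $V_+$ branch ($-dU^2$) is the first \emph{competing} case. There $h$ is negative for small $U>0$, but the quartic term in $\Phi(U)=\tfrac14U^4-\tfrac d3U^3-\tfrac\omega2U^2$ dominates at infinity, so $\Phi$ still has a positive zero and the homoclinic exists for every $\alpha>0$, giving the two pairs with $V_1<0$ and $V_2>0$.

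\noindent
\textbf{Case 3.} With $a,b>0$ I would run a perturbation from Case 2. The reduced equations are $-U''+\omega U=U^3-\alpha V_\pm(U)U=:h_\pm(U)$, and since $V_\pm(U)=\pm(\alpha/a)^{1/2}U+O(U^2)$ the $h_\pm$ are $O(U^2)$-perturbations of the Case 2 nonlinearities $U^3\mp dU^2$. The quantitative key is that for small $\alpha$ the relevant solutions have small amplitude: the explicit soliton $U=\sqrt{2\omega}\,\sech(\sqrt{\omega}\,x)$ of $-U''+\omega U=U^3$ has $\|U\|_\infty^2=2\omega=O(\alpha)$, which sits far below the threshold $p_{\max}/\alpha=4a^3/27b^2\alpha\to+\infty$. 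Hence the positive branch $V_+(U)$ is well defined and smooth along the whole orbit, and both potentials $\Phi_\pm$ keep a positive zero. I would then either continue the Case 2 solutions by the implicit function theorem, or re-run the Nehari minimization for $h_\pm$ after checking the structural conditions (superlinearity at $0$ and $\infty$, subcriticality, positivity of $\int H_\pm$ on a test direction) uniformly for small $\alpha$; the relation $aV^2-bV^3=\alpha U^2$ then forces $V\in L^2\cap L^4$ and recovers a genuine solution of the non-integrated system. This produces $(U_1,V_1)$ with $V_1=V_+>0$ and $(U_2,V_2)$ with $V_2=V_-<0$, both with $U_i\in H^1(\R)$.

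\noindent
\textbf{Main obstacle.} The hardest part will be the two competing branches (the $V>0$ solutions of Cases 2 and 3), where $h$ changes sign and $J$ lacks the clean mountain-pass geometry of the purely superlinear case: one must show that the fiber map $t\mapsto J(tU)$ attains a nontrivial maximum and that the Nehari infimum is achieved despite the sign change, or bypass this entirely through the phase-plane/energy argument above, which I regard as the more robust tool in this one-dimensional autonomous setting. The second delicate point is technical: controlling the implicit branch $V_+(U)$ up to the fold and making the amplitude/threshold comparison uniform in $\alpha$, which is exactly what forces the smallness of $\alpha$ in Case 3. This analysis is also consistent with Remark \ref{rem_V>0}: reversing the sign of the cubic term from defocusing to focusing changes the sign of the $U^4$ contribution in the energy identity, destroying the positive zero of $\Phi$ and with it every $V>0$ solution.
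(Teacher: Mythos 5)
Your proposal is correct in substance, but it runs on a genuinely different engine than the paper's proof. Both arguments share the same skeleton: integrate the conservation law to get the pointwise identity $aV^2-bV^3=\alpha U^2$, dissect the branches of the cubic ($a=0$: $V=-(\alpha/b)^{1/3}U^{2/3}$; $b=0$: $V=\pm(\alpha/a)^{1/2}U$; $a,b>0$: two branches $V_\pm$, the positive one only defined below the fold value $4a^3/27b^2$), and, in case 3, exploit the same quantitative mechanism --- for small $\alpha$ the solution's amplitude scales like that of the explicit $\sech$ soliton, so $\alpha\|U\|_\infty^2=O(\alpha^2)$ stays below the fold and the truncation can be removed a posteriori. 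The difference is how the scalar equation is solved. The paper is purely variational: in cases 1--2 it minimizes the action on a Nehari manifold, using Schwarz symmetrization and the compact embedding \eqref{compactemb}; in case 3 it splits the claim into Theorems \ref{thm:auxteo1} ($V<0$) and \ref{thm:auxteo2} ($V>0$), produces Palais--Smale sequences via the mountain-pass theorem, restores compactness by translations, and for $V>0$ introduces the truncation \eqref{eq:auxiliaryH} together with the level comparison \eqref{estimate} against the explicit soliton $W$ to get the crucial bound $\|U\|_\infty^2\leq C\rho\alpha$. You instead rely primarily on the one-dimensional autonomous phase-plane/quadrature criterion: the potential $\Phi$ is negative near $0$ and has a transversal first positive zero $\zeta_0$, yielding the even, decreasing homoclinic with $\|U\|_\infty=\zeta_0$. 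In this 1D setting your route is more elementary: it avoids all compactness machinery (no symmetrization, no Palais--Smale sequences, no vanishing/translation dichotomy), it makes the fold comparison in case 3 transparent (since $\zeta_0=O(\sqrt{\alpha})$ is read off the potential), it treats the sign-changing quadratic term of case 2 --- which the paper dismisses as ``analogous'' but which is genuinely delicate in the Nehari framework, as you rightly note --- on the same footing as the other cases, and it would in fact give the $V<0$ solution of case 3 for every $\alpha>0$, not only small $\alpha$. What the paper's method buys is robustness and extra information: the solutions are identified as least-action (ground-state) critical points, and the argument survives in settings where quadrature is unavailable (higher dimensions, non-autonomous perturbations). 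To complete your sketch you would still need to check the transversality condition $\Phi'(\zeta_0)>0$ branch by branch (e.g.\ by writing $\Phi(U)=U^2\phi(U)$ with $\phi$ strictly increasing in cases 1 and 2), and to phrase case 3 carefully as: truncate the positive branch beyond the fold, solve, then verify the orbit never reaches the truncation; also, the implicit-function-theorem continuation you mention as an alternative needs a rescaling first, since the solutions degenerate to zero as $\alpha\to 0$.
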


These last two theorems complement some results in \cite{CDW}, \cite{CDP} and \cite{DFO1}.
The techniques involve variation methods for elliptic problems, and consist on either minimization with $L^p $--constraints or minimizations using Nehari-type manifolds.

\bigskip

The CGL equation describes (cf.\cite{AK}) a large class of phenomena like phase transitions, superconductivity, superfluidity and Bose-Einstein condensation to liquid crystals. The coupling of a CGL equation with a quasilinear conservation law can describe the interaction between a laser beam and a fluid flow. Other examples of coupling can be considered (\cite{S} and \cite{TK}).This kind of interactions are particular cases of the general theory of the interactions between short and long waves motivated by the seminal paper of D.J.Benney (\cite{B}) and first studied in the special case of $f$ linear, $g(v)=v$ and $\theta = \frac \pi 2$ (Schr\"odinger equation) by M.Tsutsumi and S.Hatano (cf .\cite{TH1} and \cite{TH2}).
\end{section}
\section{Existence and uniqueness of local strong solutions}\label{sec:2}
\noindent
The main idea to establish Theorem \ref{t2.1} is to apply a variant of T. Kato's Theorem 6 in \cite{K} by introducing a change of the dependent variables $(u,v)$, as done in \cite{DFO} (see also \cite{O} and \cite{ST}).\\ 
Let us put, for $f$ and $g$ verifying the assumptions of Theorem \ref{t2.1},
\begin{equation}
\label{2.1}
F=u_t.
\end{equation}
Equation (\ref{1.3}-a) can then be rewritten as
\begin{equation}
\label{2.2}
u=(\partial_{xx}-1)^{-1}(|u|^2u+u(\alpha g(v)-1)+e^{-i\theta}F).
\end{equation}
Also, by  differentiating (\ref{1.3}-a) with respect to $t$ and using equation (\ref{1.3}-b), we obtain that
\begin{multline*}
F_t-e^{i\theta}F_{xx}\\=-e^{i\theta} \left(2|u|^2F+u^2\overline{F}+\alpha F g(v)-\alpha ug'(v)f'(v)v_x+\alpha^2ug'(v)(g'(v)|u|^2)_x\right).
\end{multline*}
Hence, instead of the Cauchy Problem \eqref{1.3}-\eqref{1.5}, we will consider the following alternative problem, which has the advantage of not presenting derivative losses in the nonlinear term:
\begin{equation}
\label{2.3}
\left\{\begin{array}{rlll}
F_t-e^{i\theta}F_{xx}&=&K(t,F,v)\\
\\
v_t+(f(v))_x&=&\alpha (g'(v)|\tilde{u}|^2)_x,
\end{array}\right.
\end{equation}
with 
\begin{multline*}
K(t,F,v)\\=-e^{i\theta}\left(2|u|^2F+u^2\overline{F}+\alpha F g(v)-\alpha ug'(v)f'(v)v_x+\alpha^2ug'(v)(g'(v)|\tilde{u}|^2)_x \right), 
\end{multline*}
where 
$$u(t,x)=u_0(x)+\int_0^tF(s,x)ds,$$
$$\tilde{u}=(\partial_{xx}-1)^{-1}(|u|^2u+u(\alpha g(v)-1)+e^{-i\theta}F)$$
and for initial data
\begin{equation}\label{2.5}
\begin{split}
F(0,x)&=F_0(x):=e^{i\theta}({u_0}_{xx}(x)-|u_0(x)|^2u_0(x))-\alpha g(v_0(x))u_0(x))   \in H^1(\er)\\
 v(0,x)&=v_0(x)\in H^2(\er).
\end{split}
\end{equation}
Concerning this new problem, we will show the following:
\begin{Lemma}
\label{Lemanovo}
Let $(F_0,v_0)\in H^1(\er)\times H^2(\er)$. Then, there exists $T>0$ and a unique strong solution
$$(F,v)\in C^j([0;T];H^{1-2j}(\er))\times C^j([0;T];H^j(\er)),\quad (j=0;1)$$
to \eqref{2.3} with $F(0,\cdot)=F_0$ and $v(0,\cdot)=v_0$.
\end{Lemma}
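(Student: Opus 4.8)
The plan is to recast \eqref{2.3} as an abstract quasilinear evolution equation and to apply the variant of T.~Kato's Theorem~6 in \cite{K} employed in \cite{DFO}. Writing $w=(F,v)$, I would work in the solution space $X=H^1(\er)\times H^2(\er)$, continuously and densely embedded in $Y=H^{-1}(\er)\times H^1(\er)$, and rewrite \eqref{2.3} as
\[
w_t+A(w)w=G(t,w),\qquad w(0)=(F_0,v_0),
\]
where $A(w)=\mathrm{diag}\big(-e^{i\theta}\partial_{xx},\,f'(v)\partial_x\big)$ collects the principal parts and $G$ gathers the remaining terms: $K(t,F,v)$ in the first component and the source $\alpha(g'(v)|\tilde u|^2)_x$ in the second. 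The auxiliary functions $u(t)=u_0+\int_0^tF(s)\,ds$ and $\tilde u=(\partial_{xx}-1)^{-1}(|u|^2u+u(\alpha g(v)-1)+e^{-i\theta}F)$ are treated as (time-nonlocal) functionals of $w$; crucially the elliptic solve gains two derivatives, so $u\in H^1$ forces $\tilde u\in H^3$, and it is exactly this gain that removes the apparent derivative loss in $G$.

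Next I would verify Kato's structural hypotheses on a ball of $X$. For the generation and stability condition, the parabolic block $-e^{i\theta}\partial_{xx}$ generates an analytic semigroup on $H^{-1}$ since $\cos\theta>0$ by \eqref{1.2}; for the hyperbolic block, a real coefficient $f'(v)$ with $v\in H^2\hookrightarrow C^1$ makes $f'(v)\partial_x$ the generator of a $C_0$-semigroup on $H^1$, with constants controlled by $\|f'(v)\|_{W^{1,\infty}}$ and hence by $\|v\|_{H^2}$, so stability of the family follows from the uniformity of these bounds on the ball. For the conjugation condition I would use the isomorphism $S=\mathrm{diag}\big(1-\partial_{xx},\,(1-\partial_{xx})^{1/2}\big)\colon X\to Y$ and check that $SA(w)S^{-1}-A(w)$ is bounded on $Y$ and Lipschitz in $w$; the parabolic block commutes with its constant-coefficient conjugant, so the content reduces to the Kato-type commutator estimate $[(1-\partial_{xx})^{1/2},f'(v)\partial_x](1-\partial_{xx})^{-1/2}\in\mathcal L(H^1)$, which holds because $f\in C^3$ and $v\in H^2$ give $f'(v)\in H^2$.

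It then remains to control $G$: I would show that $G(t,\cdot)\colon X\to X$ sends bounded sets to bounded sets and is Lipschitz with respect to the weaker $Y$-norm, uniformly for $t\in[0,T]$. Here the hypothesis $g(v)=\pm v+\rho$ is convenient, for $g'(v)=\pm1$ is constant: the source reduces to $\pm\alpha(|\tilde u|^2)_x\in H^2$ and the awkward term of $K$ to $\alpha^2u(|\tilde u|^2)_x\in H^1$. Using that $H^1,H^2,H^3$ are Banach algebras in one dimension, the smoothing of $(\partial_{xx}-1)^{-1}$, and the fact that $F\mapsto u_0+\int_0^{\cdot}F\,ds$ is a bounded, Lipschitz operation on $C([0,T];H^1)$ (this is how the time-nonlocal dependence is absorbed, exactly as in \cite{DFO}), every term of $G$ is estimated in $X$ and shown Lipschitz in $Y$. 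Feeding these verifications into Kato's theorem produces, for some $T>0$, a unique $w=(F,v)\in C([0,T];X)\cap C^1([0,T];Y)$, which is the asserted regularity.

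The routine estimates (generation of the parabolic semigroup, the algebra products) are not where the difficulty lies. The two genuine technical points are (a) the commutator estimate underlying the conjugation condition, the standard but delicate ingredient that renders the quasilinear hyperbolic block amenable to Kato's method, and (b) checking that the $\tilde u$-dependent terms of $G$ land in $X$ rather than merely in $Y$. I expect (b) to be the crux: two derivatives fall on $\tilde u$ inside $G$, and the scheme closes only because $\tilde u$ is two derivatives smoother than the data through the elliptic representation; tracking this gain uniformly on the ball, jointly with the Lipschitz dependence of $u=u_0+\int_0^tF$ on $F$, is where most of the effort will go.
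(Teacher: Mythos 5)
Your proposal follows essentially the same route as the paper: both recast \eqref{2.3} as a quasilinear evolution system $w_t+A(w)w=(\text{lower-order terms})$ with diagonal principal part (the analytic-semigroup block $-e^{i\theta}\partial_{xx}$, valid since $\cos\theta>0$, and the transport block $f'(v)\partial_x$ with stability constants controlled by $\|v\|_{H^2}$ on a ball), treat $u=u_0+\int_0^t F$ and $\tilde u=(\partial_{xx}-1)^{-1}(\cdot)$ as nonlocal functionals whose elliptic two-derivative gain removes the derivative loss, and invoke the variant of Kato's theorem from \cite{K} as adapted in Lemma 2.1 of \cite{DFO}. The only cosmetic differences are that the paper passes to real variables $(\Re F,\Im F,v)$ and chooses $(H^{-1}(\er))^2\times L^2(\er)$ as the large space, so the conjugating isomorphism is $1-\partial_{xx}$ in every slot (avoiding the fractional Kato--Ponce-type commutator your choice $H^{-1}\times H^1$ requires for the $v$-block), but this does not change the substance of the argument.
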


\medskip

\noindent
\begin{proof}
We begin by setting this Cauchy Problem in the framework of real spaces. Putting
$$F_1=\Re(F),\,F_2=\Im(F), u_1=\Re(u),\,u_2=\Im(u),$$
$${F_1}_0=\Re(F_0)\textrm{ and }{F_2}_0=\Im(F_0),$$
with $U=(F_1,F_2,v)$, system \eqref{2.3} can be rewritten as
\begin{equation} \label{2.6}
U_t+A(U)U=h(t,U)\\
\end{equation}
for initial data
\begin{equation} \label{2.6id}
U(0,x)=(F_1(0,x), F_2(0,x), v (0,x))=({F_1}_0(x),{F_2}_0(x),v_0(x)),
\end{equation}
where 
\begin{equation}
\label{2.7}
A(U)=\left[\begin{array}{ccc}
-\cos(\theta)\,\partial_{xx}&\sin(\theta)\,\partial_{xx}&0\\
-\sin(\theta)\,\partial_{xx}&-\cos(\theta)\,\partial_{xx}&0\\
0&0&f'(v)\partial_x
\end{array}\right]
\end{equation}
and
\begin{equation}
\label{2.8}
h(t,U)=\left[\begin{array}{ccc}
\Re(K(t,F,v))\\
\Im(K(t,F,v))\\
\alpha(g'(v)|\tilde{u}|^2)_x
\end{array}\right]\textrm{(recall that $g'(v) =\pm 1$)}.
\end{equation}
We now note that the operator $e^{i\theta}\partial_{xx}$, $-\frac{\pi}2<\theta<\frac{\pi}2$, is the infinitesimal generator of an analytic semigroup of contractions $(\mathcal{T}_{\theta}(t))_{t\geq 0}$ in $L^2(\er)$, with domain $H^2(\er)$, verifying the estimates (see p. 248 in \cite{CDW2})
\begin{equation}
\label{2.9}
\|\mathcal{T}(t)\psi\|_r\leq (cos(\theta))^{-\frac 12(1-\frac 1p+\frac 1r)}t^{-\frac 12(\frac 1p-\frac 1r)}\|\psi\|_p,\, \forall t>0,\,1\leq p\leq r\leq +\infty.
\end{equation}
Hence, if we set $X=(H^{-1}(\er))^2\times L^2(\er)$, $Y=(H^{1}(\er))^2\times H^2(\er)$, then 
$$A\,:\,U=(F_1,F_2,v)\in W_R \longrightarrow G(X,1,\beta),$$ 
where $R>0$, $W_R=\{U\in Y\,:\,\|U\|_Y<R\}$ and $G(X,1,\beta)$ denotes the set of all linear operators $D$ in $X$ such that $-D$ generates a $C_0$-semigroup $\{e^{-tD}\}_{t\geq 0}$ with, for all $t\geq 0$,    
$$\|e^{-tD}\|_{\mathcal{L}(X)} \leq e^{\beta t},\quad \beta= \frac 12\|f''(v(x))v'(x)\|_{\infty}\leq \gamma(R),$$
where $\gamma$ is a continuous function.\\
Arguing as in the proof of Lemma 2.1 in \cite{DFO} and by adapting the general Kato's theory for quasilinear systems (\cite{K}), we prove the existence of local strong solutions for the Cauchy Problem \eqref{2.6}--\eqref{2.6id}.\end{proof}

\medskip
\noindent
\begin{proof}[Proof of Theorem \ref{t2.1}]
Let $(u_0,v_0)\in H^3(\er)\times H^1(\er)$. For $F_0\in H^1(\er)$ defined by \eqref{2.5}, we consider the solution $(F,v)$ given by Lemma \ref{Lemanovo}.Then, putting
\begin{equation}
\label{defu}
u(t,x)=u_0(x)+\int_0^t F(s,x)dx,
\end{equation}
we deduce
$$u_{tt}(t,x)=F_t(t,x)=e^{i\theta}F_{xx}+K(t,F,v)=$$
$$=e^{i\theta}F_{xx}-e^{i\theta}(2|u|^2F+u^2\overline{F}+\alpha Fg(v)-\alpha u g'(v)f'(v)v_x+\alpha^2ug'(v)(g'(v)|\tilde{u}|^2)_x$$
$$=e^{i\theta}F_{xx}-e^{i\theta}(2|u|^2F+u^2\overline{F}+\alpha Fg(v)-\alpha u g'(v)f'(v)v_x+\alpha u g'(v)(v_t+(f(v))_x))$$
$$=e^{i\theta}F_{xx}-e^{i\theta}(2|u|^2F+u^2\overline{F}+\alpha Fg(v)+\alpha ug'(v)v_t),$$
hence
$$e^{-i\theta}u_{tt}=F_{xx}-2|u|^2F-u^2F-\alpha Fg(v)-\alpha u\frac{\partial}{\partial t}(gv).$$
From $u_{txx}=F_{xx}$ and $u_t=F$ we derive
$$e^{-i\theta} u_{tt}=(u_{xx})_t-2|u|^2u_t-u^2\overline{u}_t-\alpha u_tg(v)-\alpha u\frac{\partial }{\partial t}(g(v))$$
$$=(u_{xx})_t-\alpha\frac{\partial}{\partial t}(ug(v))-\frac{\partial}{\partial t}(|u|^2u),$$
and we obtained that
$$(e^{-i\theta} u_t-u_{xx}+\alpha g(v)u+|u|^2u)_t=0.$$
and
$$e^{-i\theta} u_t-u_{xx}+\alpha g(v)u+|u|^2u=(e^{-i\theta} u_t-u_{xx}+\alpha g(v)u+|u|^2u)|_{t=0}$$
$$=e^{-i\theta}F(x,0)-u_{xx}(x,0)+\alpha u(x,0)g(v(x,0))+|u|^2u(x,0)=0:$$
We obtained that
$$e^{-i\theta}u_t=u_{xx}-|u|^2u-\alpha ug(v).$$
Noticing that $u_{xx}=e^{-i\theta}u_t+|u|^2u+\alpha ug(v),$
$$u=(\partial_{xx}-1)^{-1}(|u|^2u+u(\alpha g(v)-1)+e^{-i\theta}F)=\tilde{u},$$
and so
$$v_t+(f(v))_x=\alpha(g'(v)|u|^2)_x:$$
we showed that $(u,v)$ is a solution of the Cauchy Problem \eqref{1.3}-\eqref{1.5}. Also, from Lemma \eqref{Lemanovo} and \eqref{defu}, we obtain that $u\in C([0,T];H^3(\er))$.\end{proof}

\section{Existence of a weak solution}\label{sec:3}
\noindent
We begin this section by deriving an {\it a priori} estimate for the solutions of system \ref{1.4}, which extends Lemma 1.2 in \cite{DF} and Lemma 2.2 in \cite{DFF}:
\begin{Prop}
	\label{P3.2}
	Let 
	\[
	(u^{\epsilon},v^{\epsilon})\in C([0,+\infty[;(H^1(\er))^2)\cap W_{loc}^{1,2}([0,+\infty[;(L^2(\er))^2)
	\]
	be a solution of system \ref{1.4} with initial data $({u_0}^{\epsilon},{v_0}^{\epsilon})=(u_0,v_0)\in (H^1(\er))^2$, with $f$ and $g$ verifying the assumptions of Theorem \ref{T4.1}. Then, there exists a constant $\alpha_0>0$ independent of $\epsilon$ and a positive function $h\in C([0,+\infty[)$, independent of $\alpha$ and $\epsilon$ such that for $\alpha\leq \alpha_0$, $\epsilon\leq 1$ and for all $t\geq 0$ we have
	\begin{multline}
	\label{3.15}
	\int |u^{\epsilon}|^2+\int |u_x^{\epsilon}|^2+\int (v^{\epsilon})^2+\int (v^{\epsilon})^4+\int_0^t\int |u^{\epsilon}_x|^2dxd\tau\\
	+\int_0^t\int |u^{\epsilon}_t|^2dxd\tau+\epsilon\int_0^t\int(v_x^{\epsilon})^2dxd\tau\leq h(t).
	\end{multline}
\end{Prop}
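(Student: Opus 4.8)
The plan is to derive the estimate \eqref{3.15} by combining energy-type identities obtained from testing the two equations in \eqref{1.4} against suitable multipliers, and then closing the resulting differential inequality via a smallness condition on $\alpha$. First I would multiply equation \ref{1.4}(a) by $\overline{u^\epsilon_t}$, integrate over $\er$, and take the real part. Since $\Re(e^{-i\theta}u^\epsilon_t\overline{u^\epsilon_t})=\cos(\theta)\int|u^\epsilon_t|^2$ and $\cos\theta>0$ by \eqref{1.2}, this produces the dissipative term $\cos(\theta)\int|u^\epsilon_t|^2$ on the left. The terms $\Re\int u^\epsilon_{xx}\overline{u^\epsilon_t}=-\frac12\frac{d}{dt}\int|u^\epsilon_x|^2$ and $\Re\int|u^\epsilon|^2u^\epsilon\overline{u^\epsilon_t}=\frac14\frac{d}{dt}\int|u^\epsilon|^4$ assemble into the time derivative of the natural energy $E(t)=\frac12\int|u^\epsilon_x|^2+\frac14\int|u^\epsilon|^4$, while the coupling term $\alpha\Re\int g(v^\epsilon)u^\epsilon\overline{u^\epsilon_t}$ will have to be controlled. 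I would rewrite this last term, using $\frac{d}{dt}|u^\epsilon|^2=2\Re(u^\epsilon\overline{u^\epsilon_t})$, as $\frac{\alpha}{2}\frac{d}{dt}\int g(v^\epsilon)|u^\epsilon|^2-\frac{\alpha}{2}\int g'(v^\epsilon)v^\epsilon_t|u^\epsilon|^2$, moving the full time derivative into the energy and leaving a remainder involving $v^\epsilon_t$.

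Next I would extract an estimate for $v^\epsilon$ in $L^2\cap L^4$. Testing equation \ref{1.4}(b) against $v^\epsilon$ and integrating, the flux term $\int (f(v^\epsilon))_x v^\epsilon$ yields (after integration by parts) an expression in the primitive of $vf'(v)$, and with $f(v)=av^2-bv^3$ the dominant contribution $-b$ produces a coercive $\int (v^\epsilon)^4$ term with a favorable sign; this is exactly where the specific form of $f$ and the hypothesis $b>0$ are used. The viscosity term gives $\epsilon\int(v^\epsilon_x)^2$, matching the left-hand side of \eqref{3.15}. The coupling term $\alpha\int(g'(v^\epsilon)|u^\epsilon|^2)_x v^\epsilon=-\alpha\int g'(v^\epsilon)|u^\epsilon|^2 v^\epsilon_x$ (or, alternatively tested to generate $\int(v^\epsilon)^3$ and $\int(v^\epsilon)^2$ control) must be absorbed using $g\in W^{3,\infty}$, so that $g'$ is bounded, together with the $H^1$-bound on $u^\epsilon$ coming from the first identity and the Gagliardo–Nirenberg inequality $\|u^\epsilon\|_\infty^2\lesssim\|u^\epsilon\|_2\|u^\epsilon_x\|_2$.

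The key structural point is that the two remainder terms — the $v^\epsilon_t$-remainder from the $u$-equation and the coupling term from the $v$-equation — should be arranged to nearly cancel, or at least to be jointly controllable. To make this work I expect to add a multiple of the two identities and to use the first equation to substitute for $v^\epsilon_t=-(f(v^\epsilon))_x+\alpha(g'(v^\epsilon)|u^\epsilon|^2)_x+\epsilon v^\epsilon_{xx}$ in the remainder $\frac{\alpha}{2}\int g'(v^\epsilon)v^\epsilon_t|u^\epsilon|^2$, so that all surviving cross-terms carry a factor of $\alpha$ (or $\alpha^2$) and can be bounded by $\alpha$ times the energy plus lower-order quantities. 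Collecting everything, I would arrive at a differential inequality of the form $\frac{d}{dt}\Phi(t)+\cos(\theta)\int|u^\epsilon_t|^2+\epsilon\int(v^\epsilon_x)^2\leq C\alpha\,\Phi(t)+(\text{lower order})$, where $\Phi$ is a modified energy equivalent to the left-hand side of \eqref{3.15} \emph{provided} $\alpha$ is small enough that the indefinite term $\frac{\alpha}{2}\int g(v^\epsilon)|u^\epsilon|^2$ absorbed into $\Phi$ does not destroy its coercivity — this is where $\alpha\leq\alpha_0$ enters, with $\alpha_0$ depending only on $\|g\|_\infty$ and the embedding constants, hence independent of $\epsilon$.

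The main obstacle will be the control of these coupling remainders so that the resulting function $h(t)$ is independent of both $\alpha$ and $\epsilon$, as the statement demands. The independence from $\epsilon$ is automatic once every $\epsilon$-term appears with a good sign on the left (which holds here since $\epsilon\leq1$ and the viscosity contributes dissipation), but the independence from $\alpha$ requires the bounds to be uniform for all $\alpha\in(0,\alpha_0]$; the cleanest way is to establish, after fixing $\alpha_0$, an inequality $\frac{d}{dt}\Phi\leq C_1\Phi+C_2$ with $C_1,C_2$ independent of $\alpha$ and $\epsilon$, and then apply Gronwall's lemma to obtain $\Phi(t)\leq h(t):=(\Phi(0)+C_2 t)e^{C_1 t}$ with $\Phi(0)$ controlled by the fixed $H^1\times H^1$ norm of $(u_0,v_0)$. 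Verifying that the $\alpha$-dependent terms can be bounded uniformly — rather than producing constants that blow up as $\alpha\to0$ or grow with $\alpha$ — and that the coercivity of $\Phi$ survives is the delicate bookkeeping at the heart of the proof.
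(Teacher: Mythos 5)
Your opening move (testing (\ref{1.4}-a) with $\overline{u^\epsilon_t}$, rewriting the coupling term as a time derivative plus a remainder in $v^\epsilon_t$, and substituting $v^\epsilon_t$ from the second equation) coincides with the paper's first identity, but after that the proposal contains genuine errors. The most serious one is your claimed source of the $\int (v^\epsilon)^4$ bound: testing the conservation law against $v^\epsilon$ makes the flux term vanish identically, since $\int (f(v))_x v\,dx=-\int f(v)v_x\,dx=-\int \partial_x F(v)\,dx=0$ where $F'=f$; it produces no coercive quartic term at all, so "this is where $b>0$ is used" is false as stated. In the paper the $L^4$ control of $v^\epsilon$ arises elsewhere: precisely from the substitution you describe in the $u$-energy identity, because $\alpha(g'(v)|u|^2)_x=v_t+(f(v))_x-\epsilon v_{xx}$ turns the remainder into $\tfrac12\int v_t f(v)=\tfrac{d}{dt}\int F(v)$ (plus $\epsilon$-terms), so the conserved functional becomes $\int\big(|u_x|^2+\tfrac12|u|^4+\alpha g(v)|u|^2-F(v)\big)$, and it is the inequality $-F(v)\geq -c_0v^2+c_1v^4$ (valid because the cubic coefficient $b$ is positive) that yields the $\int v^4$ term. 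A second missing ingredient is the mass/dissipation identity obtained by multiplying (\ref{1.4}-a) by $e^{i\theta}\overline{u^\epsilon}$: with $g\geq0$ and $\cos\theta>0$ it gives $\int|u^\epsilon|^2+2\cos\theta\int_0^t\!\!\int(|u^\epsilon_x|^2+|u^\epsilon|^4)\leq\int|u_0|^2$. You never derive it, yet it is the only source of the term $\int_0^t\!\!\int|u^\epsilon_x|^2$ in \eqref{3.15} and of the uniform bound $\|u^\epsilon\|_2\leq\|u_0\|_2$ that you invoke inside Gagliardo--Nirenberg; your first identity alone cannot supply these.

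The third gap concerns $\epsilon$-uniformity in the $v$-identity. Keeping the coupling term as $-\alpha\int g'(v^\epsilon)|u^\epsilon|^2 v^\epsilon_x$ and estimating it with $\|g'\|_\infty$, as you propose, necessarily leaves a factor $\|v^\epsilon_x\|_2$, which your estimates control only with weight $\sqrt{\epsilon}$; absorbing it into $\epsilon\int(v^\epsilon_x)^2$ by Young's inequality costs a factor $1/\epsilon$, so the resulting $h$ would blow up as $\epsilon\to0$. The derivative must be moved off $v^\epsilon$: write $-\alpha\int (g(v^\epsilon))_x|u^\epsilon|^2=2\alpha\Re\int g(v^\epsilon)u^\epsilon\overline{u^\epsilon_x}$ and either use $\|g\|_\infty$, or, as the paper does, substitute $\alpha g(v^\epsilon)u^\epsilon=u^\epsilon_{xx}-|u^\epsilon|^2u^\epsilon-e^{-i\theta}u^\epsilon_t$ from the first equation, which collapses this term to $-2\Re\int e^{-i\theta}u^\epsilon_t\overline{u^\epsilon_x}$, controlled by the dissipation quantities $\int_0^t\|u^\epsilon_t\|_2^2$ and $\int_0^t\|u^\epsilon_x\|_2^2$. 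Finally, a smaller point: since $g\geq0$, the term $\alpha\int g(v^\epsilon)|u^\epsilon|^2$ in the energy is favorable, so coercivity is not where $\alpha_0$ enters; the smallness of $\alpha$ is needed to close the Gronwall-type argument for the cross terms $\epsilon\alpha\int g'(v^\epsilon)(|u^\epsilon|^2)_xv^\epsilon_x$ and $\epsilon\alpha\int g''(v^\epsilon)|u^\epsilon|^2(v^\epsilon_x)^2$, which is also more delicate than the linear inequality $\frac{d}{dt}\Phi\leq C_1\Phi+C_2$ you anticipate.
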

\noindent
\begin{proof}
\noindent
For convenience, we drop the superscript $\epsilon$. 
We multiply equation (\ref{1.4}-a) by $\overline{u}_t$ and integrate in $\er$ to obtain, taking the real part and denoting  $\int_{\er} \cdot\, dx$ simply by  $\int \cdot \,$,
$$\displaystyle \cos \theta \int |u_t|^2+\frac 12 \frac d{dt}\int \Big(|u_x|^2+\frac 12|u|^4+\alpha g(v)|u|^2\Big)$$
\begin{displaymath}
\begin{array}{lllll}
&=&\displaystyle\frac{\alpha}2\int g'(v)|u|^2v_t\\
&=&\displaystyle\frac{\alpha}2\int \Big(g'(v)|u|^2\Big[-(f(v))_x+\alpha (g'(v)|u|^2)_x+\epsilon v_{xx}\Big]\Big)\\
&=&\displaystyle\frac{\alpha}2\int \Big(g'(v)|u|^2\Big)_xf(v)-\frac{\alpha}2\epsilon\int g''(v)|u|^2(v_x)^2\\
&&-\displaystyle\frac{\alpha}2\epsilon\int g'(v)(|u|^2)_xv_x\\
&=&\displaystyle\frac 12\int \Big(v_t+(f(v))_x-\epsilon v_{xx}\Big)f(v)-\frac{\alpha}2\epsilon \int g''(v)|u|^2(v_x)^2\\
&&-\displaystyle\frac{\alpha}2\epsilon\int g'(v)(|u|^2)_xv_x,
\end{array}
\end{displaymath}
and so, for $t\in [0,+\infty[$,
\begin{multline}
\label{3.2}
2\cos\theta \int |u_t|^2+\frac{d}{d t}\int\Big(|u_x|^2+\frac 12|u|^4+\alpha g(v)|u|^2-F(v)\Big)\\
-\epsilon \int f'(v)(v_x)^2+\epsilon \alpha \int g'(v)(|u|^2)_xv_x+\epsilon \alpha\int g''(v)|u|^2(v_x)^2=0,
\end{multline}
where 
$$F(v)=\int_0^vf(\xi)d\xi.$$
Now, multiplying (\ref{1.4}-a) by $e^{i\theta}\overline{u}$, integrating in $\er$ and taking the real part, we obtain, for $t\in[0,+\infty[$, 
\begin{equation}
\label{3.3}
\frac d{dt}\int |u|^2+2\cos\theta \int\Big(|u_x|^2+|u|^4+\alpha g(v)|u|^2\Big)=0.
\end{equation}
Finally, multiplying (\ref{1.4}-b) by $v$ and integrating, we obtain
\begin{displaymath}
\begin{array}{llll}
\displaystyle \frac 12\frac{d}{dt}\int v^2&=&\displaystyle\int v\Big(\alpha (g'(v)|u|^2)_x+\epsilon v_{xx}-(f(v))_x\Big)\\
&=&\displaystyle -\epsilon \int (v_x)^2-\alpha \int (g(v))_x|u|^2\\
&=&\displaystyle -\epsilon \int (v_x)^2+2\alpha \int \Re(g(v)u\overline{u}_x)\\
&=&\displaystyle -\epsilon \int (v_x)^2+2\int\Re((u_{xx}-|u|^2u-e^{-i\theta} u_t)\overline{u}_x)\\
&=&\displaystyle -\epsilon \int (v_x)^2-2\Re\Big(\int e^{-i\theta}u_t\overline{u}_x\Big)\\
&=&\displaystyle -\epsilon \int (v_x)^2-2\sin\theta\Im\Big(\int u_t\overline{u}_x\Big)-2\cos\theta\Re\Big(\int u_t\overline{u}_x\Big).
\end{array}
\end{displaymath}
and, since 
$$\frac d{dt}\Im\Big(\int u\overline{u}_x\Big)=2\Im\int\Big(u_t\overline{u}_x\Big).$$
we obtain for $t\in[0,+\infty[$
\begin{equation}
\label{3.4}
\frac d{dt}\Big(\frac 12\int v^2+\sin\theta \Im\Big(\int u\overline{u}_x\Big) \Big)+\epsilon\int (v_x)^2+2\cos\theta \Re\Big(\int u_t\overline{u}_x\Big)=0.
\end{equation}
From \eqref{3.4} we easily derive, for $t\in[0,+\infty[$, 
\begin{multline}
\label{3.5}
\epsilon\int_0^t\int (v_x)^2dxd\tau+\frac 12 \int v^2\leq |\sin\theta| \|u\|_2\|u_x\|_2\\
+2\cos\theta\Big(\int_0^t\|u_x\|_2^2d\tau\Big)^{\frac 12}\Big(\int_0^t\|u_t\|_2^2d\tau\Big)^{\frac 12}+M_0,
\end{multline}
with
\begin{equation}
\label{3.6}
M_0=\frac 12\int v_0^2+\sin\theta \Im\Big(\int u_0{\overline{u}_0}_x\Big).
\end{equation}

Now, recall that
\begin{equation}
\label{3.7}
g(\xi)\geq 0,\quad \xi\in\er.
\end{equation}
Since $\alpha>0$, we derive from \eqref{3.3}, for $t\in[0,+\infty[
$,
\begin{equation}
\label{3.8}
\int |u|^2+2\cos\theta\Big(\int_0^t\int |u_x|^2dxd\tau+\int_0^t\int |u|^4dxd\tau\Big)\leq \int |u_0|^2.
\end{equation}
\begin{Rem} 
\label{R3.1}
If the support of $g'$ is contained in $[0,+\infty[$ it is not difficult, for $v_0\geq 0$ a.e., to deduce, from (\ref{1.4}-b) and for a fixed $\epsilon>0$, that $v(t,x)\geq 0$ a.e. in $[0,+\infty[\times \er$. Indeed, putting $v_{-}:=-\min(v,0)$, 
$$v_tv_{-}=(v_{-})_tv_{-},\quad v_xv_{-}=(v_-)_xv_{-}$$
$$\textrm { and }v_x(v_{-})_x=(({v_{-}})_x)^2\, \textrm{ (cf. \cite{GR}, chap. II).}$$
Multiplying (\ref{1.4}-b) by $v_{-}$ and integrating in space and in the time interval $[0,t]$ yields
\begin{multline*}
\frac 12\int (v_-)^2-\frac 12\int ({v_0}_-)^2+\epsilon\int_0^t\int ((v_{-})_x)^2dxd\tau\\
\leq \frac{\epsilon}2\int_0^t\int((v_-)_x)^2dxd\tau+\frac 1{2\epsilon}\int_0^t\int (f'(v))^2(v_-)^2dxd\tau,
\end{multline*}
from where we deduce that
$$\int(v_-)^2\leq \frac 1{2\epsilon}\int_0^t\int (f'(v))^2(v_-)^2dxd\tau,\, t\geq 0,$$
wich implies, since $f'(v)\in L^{\infty}$, that $v_{-}=0$ a.e. (by Gronwall's inequality). \\
In this case, and for $v_0\geq0$  a.e., we can replace \eqref{3.7} by $g(\xi)\geq 0$ for $\xi\geq 0$.
\end{Rem}

From \eqref{3.5} and \eqref{3.8} we derive, for $t\in[0,+\infty[$,
\begin{equation}
\label{3.9}
\epsilon\int_0^t\int (v_x)^2dxd\tau+\frac 12\int v^2\leq c\Big(1+\|u_x\|_2^2+\int_0^t\|u_t\|_2^2d\tau\Big)^{\frac 12},
\end{equation}
with $c > 0$ independent of $\alpha$ and $\epsilon$.

\medbreak

\noindent
To simplify, we take $a=b=1$, so that $f(v)=v^2-v^3$. We have $F(v)=\frac 13v^3-\frac 14v^4$ and $f'(v)=2v-3v^2$, and so, since $2v\leq 3v^2+\frac 13$, 
\begin{equation}\label{eq:f'(v)leq}
-\epsilon\int f'(v)(v_x)^2dxd\tau\geq -\frac 13\epsilon\int (v_x)^2.
\end{equation}
For positive constants $c_0$ and $c_1$,
\begin{equation}\label{eq:F(v)leq}
-\int F(v)\geq -c_0\int v^2+c_1\int v^4.
\end{equation}
Moreover, by integrating in $[0,t]$ equation (18), and using \eqref{3.7}, \eqref{eq:f'(v)leq} and \eqref{eq:F(v)leq}, we  deduce that
\begin{multline*}
2\cos \theta \int_0^t \int |u_t|^2+\int |u_x|^2 + \frac{1}{2}\int|u|^4+c_1\int v^4-\frac{\epsilon}{3}\int_0^t \int(v_x)^2 \\
\leq \int (|(u_0)_x|^2+\frac{1}{2}|u_0|^4+\alpha g(v_0)|u_0|2-F(v_0))\\ -\epsilon \alpha \int_0^t \int g'(v) (|u|^2)_xv_x -\epsilon \alpha \int_0^t \int g''(v) |u|^2 (v_x)^2.
\end{multline*}
Combining this with  \eqref{3.8} and \eqref{3.9}  yields, for $t\in[0,+\infty[$, 
\begin{multline}\label{3.10}
\int |u|^2+\int |u|^4+\int v^2+\int v^4+\int_0^t\int |u_x|^2dxd\tau$$
\\+\int_0^t\int |u|^4dxd\tau+\int |u_x|^2+\int_0^t\int |u_t|^2dxd\tau+\epsilon\int_0^t\int (v_x)^2dxd\tau\\
\leq c\Big(1+\alpha\epsilon\int_0^t\int|uu_xv_x|dxd\tau+\alpha\epsilon\int_0^t\int |u|^2(v_x)^2dxd\tau\Big)\\
+c\Big(1+\|u_x\|_2^2+\int_0^t\|u_t\|_2^2d\tau\Big)^{\frac 12}.
\end{multline}
where $c>0$ is a constant independent of $\alpha\leq \alpha_0$ (for some $\alpha_0$) and $\epsilon$.
Let us set
\begin{equation}
\label{3.11}
q(t)=1+\|u_x\|_2^2+\int_0^t\|u_t\|_2^2d\tau.
\end{equation}
We deduce from \eqref{3.10} and the Gagliardo-Nirenberg inequality $$\|u\|_{\infty}\leq \|u\|_2^{\frac 12}\|u_x\|_2^{\frac 12} \leq \|u_0\|_2^{\frac 12}\|u_x\|_2^{\frac 12}$$ (recall \eqref{3.3}) that, for $u\in H^1(\er)$,  
\begin{multline}
\label{3.12}
q(t)+\epsilon\int_0^t\int(v_x)^2dxd\tau\leq c\big(1+\alpha\epsilon\int_0^t\|u_x\|_2^{\frac 32}\|v_x\|_2d\tau\\
+\alpha\epsilon\int_0^t\|u_x\|_2\|v_x\|_2^2d\tau+q^{\frac 12}(t)\Big),
\end{multline}
hence
\begin{equation}
\label{3.13}
q(t)\leq \psi(t):=\kappa\big(1+\alpha\epsilon\int_0^t\|u_x\|_2^{\frac 32}\|v_x\|_2d\tau+\alpha\epsilon\int_0^t\|u_x\|_2\|v_x\|_2^2d\tau\Big),
\end{equation}
Now,
$$\psi'(t)=\kappa \alpha\epsilon\Big(\|u_x\|_2^{\frac 32}\|v_x\|_2+\|u_x\|_2\|v_x\|_2^2\Big)\leq \kappa \alpha\epsilon\Big(\psi^{\frac 34}(t)\|v_x\|_2+\psi^{\frac 12}(t)\|v_x\|_2^2\Big),$$
and
$$\psi'(t)\psi^{-\frac 12}(t)\leq \kappa\alpha\epsilon\Big(\psi^{\frac 14}(t)\|v_x\|_2+\|v_x\|_2^2\Big),$$
and
\begin{displaymath}
\begin{array}{llll}
\displaystyle-2\int_0^t\psi^{\frac 12}(\tau)e^{\tau}d\tau+2\Big[\psi^{\frac 12}(\tau)e^{\tau}\Big]_0^{\tau}   &=&   \displaystyle\int_0^t\psi'(\tau)\theta^{-\frac 12}(\tau)e^{\tau}d\tau\\
&&\\
&\leq&\displaystyle \kappa \Big[\alpha \epsilon^{\frac 12}\psi^{\frac 14}(t)\Big(\int_0^te^{2\tau}d\tau\Big)^{\frac 12}\Big(\int_0^t\epsilon\|v_x\|_2^2d\tau\Big)^{\frac 12}\\
\\
&&+\alpha e^t\int_0^t\epsilon\|v_x\|_2^2d\tau\Big]\\
\\
&\leq& \kappa \alpha \epsilon^{\frac 12}\psi^{\frac 12}(t)e^t+\kappa \alpha \epsilon e^t\psi^{\frac 12}(t),
\end{array}
\end{displaymath}
therefore
$$\displaystyle-2\int_0^t\psi^{\frac 12}(\tau)e^{\tau}d\tau+2\Big[\psi^{\frac 12}(\tau)e^{\tau}\Big]_0^{\tau} \leq  \kappa \alpha \epsilon^{\frac 12}\psi^{\frac 12}(t)e^t+\kappa \alpha \epsilon e^t\psi^{\frac 12}(t),$$
by \eqref{3.9} and \eqref{3.13}. Hence, for $\epsilon\leq 1$,
$$
\phi^{\frac 12}(t)e^t\leq c\alpha\phi^{\frac 12}(t)e^t+c\int_0^t\phi^{\frac 12}e^{\tau}d\tau+c,
$$
and, for $\alpha\leq \alpha_0$ such that $1-c\alpha>0$, we derive, by Gronwall's inequality,
\begin{equation}
\label{3.14}
\phi(t)\leq h_1(t),\,h_1\in C(\er^+),
\end{equation}
hence, in view of \eqref{3.13},
$$q(t)=1+\|u_x\|_2^2+\int_0^t\|u_t\|_2^2d\tau\leq \phi(t)\leq h_1(t).$$
Finally, combining this inequality with \eqref{3.8}, \eqref{3.9} and \eqref{3.10}, we deduce the inequality stated in Proposition \ref{P3.2}.
\end{proof}

\bigskip

\noindent
Next, this {\it a priori} estimate will allow us to  show the existence of a global unique strong solution to the approximated system  \eqref{1.4}:
\begin{Teorema}
	\label{Tee}
	Let $f$ and $g$ verifying the assumptions of Theorem \ref{T4.1} and $(u_0,v_0)\in (H^1(\er))^2$. Then there exists a unique solution
	\[
	(u^{\epsilon},v^{\epsilon})\in C([0,+\infty[;(H^1(\er))^2)\cap W_{loc}^{1,2}([0,+\infty[;(L^2(\er))^2)
	\]
	to system \ref{1.4} with initial data $({u_0}^{\epsilon},{v_0}^{\epsilon})=(u_0,v_0)\in (H^1(\er))^2$.
\end{Teorema}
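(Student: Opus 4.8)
The plan is to exploit the fact that, for each fixed $\epsilon>0$, both equations in \eqref{1.4} are of parabolic type: equation (\ref{1.4}-a) is governed by the analytic semigroup $(\mathcal{T}_\theta(t))_{t\geq 0}$ generated by $e^{i\theta}\partial_{xx}$ (recall \eqref{2.9}), while equation (\ref{1.4}-b) is a viscous conservation law governed by the heat semigroup $(e^{\epsilon t\partial_{xx}})_{t\geq 0}$. Multiplying (\ref{1.4}-a) by $e^{i\theta}$ and writing both equations in mild (Duhamel) form, a solution of \eqref{1.4} is a fixed point of the map $\Phi(u,v)=(\Phi_1(u,v),\Phi_2(u,v))$ with
\begin{equation*}
\Phi_1(u,v)(t)=\mathcal{T}_\theta(t)u_0-\int_0^t\mathcal{T}_\theta(t-s)\,e^{i\theta}\big(|u|^2u+\alpha g(v)u\big)(s)\,ds
\end{equation*}
and
\begin{equation*}
\Phi_2(u,v)(t)=e^{\epsilon t\partial_{xx}}v_0+\int_0^te^{\epsilon(t-s)\partial_{xx}}\,\partial_x\big(\alpha g'(v)|u|^2-f(v)\big)(s)\,ds.
\end{equation*}
First I would establish local existence by showing that $\Phi$ is a contraction on a suitable ball of $C([0,T];(H^1(\er))^2)$ for $T>0$ small, then continue the solution using an a priori bound.

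The two nonlinear terms require different treatment. In $\Phi_1$, since $H^1(\er)\hookrightarrow L^\infty(\er)$ and $g\in C^3\cap W^{3,\infty}$, the maps $u\mapsto|u|^2u$ and $(u,v)\mapsto g(v)u$ are locally Lipschitz from $(H^1(\er))^2$ into $H^1(\er)$, and $\mathcal{T}_\theta$ is a bounded analytic semigroup on $H^1(\er)$, so this term is harmless. The delicate point is the spatial derivative in $\Phi_2$: expanding $\partial_x(g'(v)|u|^2)=g''(v)v_x|u|^2+g'(v)(|u|^2)_x$ reveals a genuine loss of one derivative relative to the $H^1$ setting. I would absorb this loss using the smoothing property of the heat semigroup, namely $\|\partial_x e^{\epsilon\tau\partial_{xx}}w\|_{H^1}\leq C(\epsilon\tau)^{-1/2}\|w\|_{H^1}$, keeping the derivative on the semigroup as written above. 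Since $(\epsilon\tau)^{-1/2}$ is integrable near $\tau=0$, the corresponding contribution to $\Phi_2$ is controlled by $C\epsilon^{-1/2}T^{1/2}$ times a polynomial in the $H^1$ norms, which is small for small $T$. Collecting these bounds gives a contraction and hence a unique local solution $(u^\epsilon,v^\epsilon)\in C([0,T];(H^1(\er))^2)$, with $T=T(\epsilon,\alpha,\|u_0\|_{H^1},\|v_0\|_{H^1})$; uniqueness follows from a Gronwall estimate on the difference of two solutions using the same Lipschitz bounds.

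To pass from local to global, I would invoke Proposition \ref{P3.2}: for $\alpha\leq\alpha_0$ and $\epsilon\leq 1$ it bounds $\|u^\epsilon(t)\|_{H^1}^2+\|v^\epsilon(t)\|_{L^2}^2$, together with $\int_0^t\int|u_t^\epsilon|^2$ and $\epsilon\int_0^t\int(v_x^\epsilon)^2$, by a finite function $h(t)$ on every interval $[0,T]$. Because the local existence time above depends only on the $H^1$ norms of the data (and on $\epsilon,\alpha$), and these norms remain bounded on $[0,T]$ by \eqref{3.15}, the standard continuation argument lets me reapply the fixed-point construction and extend $(u^\epsilon,v^\epsilon)$ to all of $[0,+\infty[$, with global uniqueness. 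The membership $u_t^\epsilon,v_t^\epsilon\in L^2_{loc}([0,+\infty[;L^2(\er))$ then follows from parabolic regularity: for $u^\epsilon$ it is already encoded in the bound on $\int_0^t\int|u_t^\epsilon|^2$ in \eqref{3.15}, and for $v^\epsilon$ it is read off from equation (\ref{1.4}-b) once the smoothing of $e^{\epsilon t\partial_{xx}}$ is used to place $v^\epsilon(t)$ in $H^2(\er)$ for a.e. $t>0$.

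I expect the main obstacle to be precisely the derivative-loss term $\alpha(g'(v)|u|^2)_x$ in the equation for $v$: it is what obstructs a naive Picard iteration in $H^1$ and forces the use of the heat kernel's regularizing estimate, with its $\epsilon$-dependent constants. This is also why the bound $h(t)$ of Proposition \ref{P3.2} is essential, since without the uniform control it provides one could not rule out finite-time blow-up of the $H^1$ norm and thus could not globalize. All remaining estimates are routine consequences of $H^1(\er)\hookrightarrow L^\infty(\er)$, the algebra structure of $H^1(\er)$, and the contraction and Gronwall machinery.
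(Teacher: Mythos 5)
Your local-existence step is essentially the paper's: the paper also writes Duhamel formulas for both equations (with $\mathcal{T}_\theta$ for $u$ and the heat semigroup for $v$) and runs a Banach fixed-point argument in $C([0,T'];(H^1(\er))^2)$. The genuine gap is in your continuation argument. You assert that the $H^1$ norms of $(u^\epsilon(t),v^\epsilon(t))$ ``remain bounded on $[0,T]$ by \eqref{3.15}'', but that estimate does \emph{not} control $\|v_x^\epsilon(t)\|_2$ pointwise in time: for the $v$-component it bounds only $\int (v^\epsilon)^2+\int(v^\epsilon)^4$ and the time-integrated, $\epsilon$-weighted quantity $\epsilon\int_0^t\int (v_x^\epsilon)^2\,dx\,d\tau$. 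Since your local existence time $T$ depends on $\|v_0\|_{H^1}$ (as you yourself state), \eqref{3.15} alone cannot rule out finite-time blow-up of $\|v^\epsilon(t)\|_{H^1}$, and the continuation does not close.

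This is precisely where the paper adds the step you are missing: write $v$ via Duhamel for the heat equation,
\begin{equation*}
v(t)=e^{\Delta t}v_0+\int_0^te^{\Delta(t-\tau)}\frac{\partial}{\partial x}\Big[\alpha g'(v)|u|^2-f(v)\Big](\tau)\,d\tau,
\end{equation*}
and use the $L^1\to L^2$ smoothing estimate $\big\|\partial_x e^{\Delta t}\psi\big\|_2\leq C t^{-3/4}\|\psi\|_1$. The $L^1$ norm of the (differentiated) bracket is controlled by products of exactly the norms that \eqref{3.15} does bound ($\|u\|_2$, $\|u_x\|_2$, $\|u\|_4$, $\|v\|_2$, $\|v\|_4$) plus terms linear in $\|v_x\|_2$; since $(t-\tau)^{-3/4}$ is integrable, a singular Gronwall inequality then yields a bound on $\|v_x(t)\|_2$ on every bounded interval. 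This is what legitimizes the continuation, and it is also what produces the estimates on $\int_0^t\|v_{xx}\|_2^2$ and $\int_0^t\|v_t\|_2^2$ needed for the claimed $W^{1,2}_{loc}$ regularity of $v^\epsilon$ (your ``parabolic regularity'' remark quietly relies on this same missing bound). Note that your own local argument already contains the right tool, namely the derivative smoothing of the heat kernel; the fix is to run that estimate globally, measuring the nonlinearity in $L^1$ so that \eqref{3.15} can be brought to bear, rather than in $H^1$.
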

\noindent
\begin{proof}
\noindent
Once again, we drop the superscript $\epsilon$.\\
For the (local) existence in $C([0,T'];(H^1(\er))^2)$, for a certain $T' < \infty $, since $e^{i\theta}\partial_{xx}$, $-\frac {\pi}2<\theta<\frac{\pi}2$, is the infinitesimal generator of an analytic semigroup of contractions $\{\mathcal{T}_\theta(t)\}_{t\geq 0}$ in $L^2(\er)$, with domain $H^2(\er)$ and verifying the estimate \eqref{2.9}, the result follows by considering the Duhamel formulas of the auxiliary system 
\begin{displaymath}
\left\{\begin{array}{rlll}
e^{-i\theta}u_t-u_{xx}&=&-|\tilde{u}|^2\tilde{u}-\alpha g(\tilde{v})\tilde{u}\\
\\
v_t-v_{xx}&=&-(f(\tilde{v}))_x+\alpha (g'(\tilde{v})|\tilde{u}|^2)_x
\end{array}\right.
\end{displaymath}
and a convenient Banach fixed-point technique (cf. \cite{DF}, Proposition 2.1, for the Schr\"odinger equation case).\\
To conclude that the solution $(u,v)$ is in $W^{1,2}(0,T',(L^2(\er))^2)$ we notice that, since
$$\frac{\partial}{\partial x}\Big(\alpha g'(v)|u|^2-f(v)\Big)\in L^2(0,T',L^2(\er)),$$
$$v\in L^2(0,T';H^2(\er))$$ and $$\frac{\partial v}{\partial t}\in L^2(0,T';L^2(\er)).$$
Moreover, since
$$|u|^2u+\alpha g(v)u,\,\frac{\partial}{\partial x}\Big(|u|^2u+\alpha g(v)u\Big)\in L^2(0,T';L^2(\er)),$$
we also conclude, by the properties of the semigroup $\{\mathcal{T}_\theta(t)\}_{t\geq 0}$, that
$$u\in L^2(0,T';H^2(\er))$$ and $$ \frac{\partial u}{\partial t}\in L^2(0,T';L^2(\er)).$$
To obtain the estimates which yield the global in time existence of $(u,v)$, we apply \eqref{3.15} and the Duhamel formula for the heat equation
$$v(t)=e^{\Delta t}v_0+\int_0^te^{\Delta(t-\tau)}\frac{\partial}{\partial x}\Big[\alpha g'(v)|u|^2-f(v)\Big](\tau)d\tau$$
and the well-known estimate
$$\Big\|\frac{\partial}{\partial x}e^{\Delta t}\psi\Big\|_2\leq \frac{C}{t^{\frac 34}}\|\psi\|_1,\quad t>0$$
to obtain
$$\|v_x(t)\|_2\leq \|v_{0x}\|_2+c\int_0^t\frac{1}{(t-\tau)^{\frac 34}}\Big[\|u\|_2\|v\|_2+(\|u\|_4^2+\|v\|_4^2+\|v\|_2)\|v_x\|_2\Big](\tau)d\tau$$
$$\leq \|{v_0}_x\|_2+c\int_0^t\frac{d\tau}{(t-\tau)^{\frac 34}}+c\int_0^t\|v_x(\tau)\|_2\frac{d\tau}{(t-\tau)^{\frac 34}}$$
by \eqref{3.15}. We then obtain an estimate for
$\int_0^t\|v_{xx}(\tau)\|_2^2$ and for $\int_0^t\|v_{t}(\tau)\|_2^2$, which achieves the proof.
\end{proof}

\bigskip

\noindent
We are now in condition to show the existence of a global weak solution to the initial Ginzburg Landau System:

\medskip

\noindent
\begin{proof}[Proof of Theorem \ref{T4.1}]
Let  $(u_0,v_0)\in (H^1(\er))^2$ and 
$$(u^{\epsilon},v^{\epsilon})\in C([0,+\infty[;(H^1(\er))^2\cap W_{loc}^{1,2}([0,+\infty[;(L^2(\er)^2)$$ the corresponding solution of \eqref{1.4} for initial data $(u_0,v_0)$. Furthermore, we assume that $\epsilon\leq 1$ and $\alpha\leq \alpha_0$ (see Proposition \ref{P3.2}).\\

\noindent
Let us fix $T>0$. We have, by \eqref{3.15},  that $(u^{\epsilon})_{\epsilon}$ is bounded in $$L^{\infty}([0,T[;H^1(\er))$$ and $(u^{\epsilon}_t)_{\epsilon}$ is bounded in $$L^{2}([0,T[;L^2(\er))\cap L^{\infty}([0,T[;H^{-1}(\er)).$$
By applying Aubin's Lemma (for each interval $]-R,R[\times]-T,T[$) and a standard diagonal extraction, there exists $u\in L^{\infty}([0,T[;H^1(\er))$ such that,  $u_t\in L^2([0,T[;L^2(\er))$, for each $T>0$, and a subsequence still denoted $(u^{\epsilon})_{\epsilon}$ such that
\begin{equation}
\label{4.1}
\begin{array}{llll}
u^{\epsilon}\rightharpoonup u\textrm{ in }L^{\infty}([0,T[;H^1(\er)) \textrm{weakly-$\ast$ and a.e. in } [0,+\infty[\times \er,\\ 
u_t^{\epsilon}\rightharpoonup u_t\textrm{ in }L^{2}([0,T[;L^2(\er)) \textrm{weakly}.
\end{array}
\end{equation}
Hence, $u\in C([0,+\infty[;L^2(\er))$ and $u(0)=u_0$.\\
We can also deduce, by \eqref{3.15}, that there exists $v\in L^{\infty}([0,T[;(L^4\cap L^2)(\er))$ and $w\in L^{\infty}([0,T[;L^{\frac 43}(\er))$ such that 
\begin{equation}
\label{4.2}
\begin{array}{llll}
v^{\epsilon}\rightharpoonup v\textrm{ in }L^{\infty}([0,T[;L^4\cap L^2(\er)) \textrm{weakly-$\ast$},\\ 
f(v^{\epsilon})-\alpha|u^{\epsilon}|^2g'(v^{\epsilon})\rightharpoonup w\textrm{ in }L^{\infty}([0,T[;L^{\frac 43}(\er)) \textrm{weakly-$\ast$}.
\end{array}
\end{equation}
Moreover we have, with $p=3$,
$$|f'(\xi)|\leq c(1+|\xi|^{p-1}),\,\xi\in\er$$
and for each real convex $C^2$ entropy function $\eta$ with compact support, we deduce from \eqref{1.4} with $q_1$ and $q_2 \in C^2(\er)$ such that $q_1'=\eta'f'$  and  $q_2'=\alpha\eta'g''$, 
$$\frac{\partial}{\partial t}\eta(v^{\epsilon})+\frac{\partial}{\partial x}(q_1(v^{\epsilon})- |u^{\epsilon}|^2 q_2(v^{\epsilon})) =(\alpha\eta'(v^{\epsilon}) g'(v^{\epsilon})-q_2(v^{\epsilon}))(|u^{\epsilon}|^2)_x+\epsilon\eta'(v^{\epsilon})\frac{\partial^2v^{\epsilon}}{\partial x^2}$$ 
$$=(\alpha\eta'(v^{\epsilon}) g'(v^{\epsilon})-q_2(v^{\epsilon}))(|u^{\epsilon}|^2)_x+\epsilon\frac{\partial^2}{\partial x^2}\eta(v^{\epsilon})-\epsilon\eta''(v^{\epsilon})\Big(\frac{\partial v^{\epsilon}}{\partial x}\Big)^2.$$
Hence, by \eqref{3.15}, we derive (see \cite{DFF}, theorem 2.1, for a similar deduction and argument) that if $\Omega$ is an open bounded subset of $]0,+\infty[\times \er$,
$$\frac{\partial}{\partial t}\eta(v^{\epsilon})+\frac{\partial}{\partial x}(q_1(v^{\epsilon})- |u^{\epsilon}|^2 q_2(v^{\epsilon}))\in K(\Omega)+B(\Omega),$$
where $K(\Omega)$ is a compact set of $H^{-1}(\Omega)$ and $B(\Omega)$ is a bounded set of finite measures in $\Omega$.\\
Because of (32) (and the $ L^2$ strong convergence of $(u^{\epsilon})_{\epsilon}$ in each interval $]-R,R[\times]-T,T[$),  we can now apply a variant of the Corollary 3.1 of Theorem 3.2 in \cite{MS} and, by a suitable diagonal extraction, we can deduce that
$$f(v^{\epsilon})-\alpha|u^{\epsilon}|^2g'(v^{\epsilon})\rightharpoonup f(v)-\alpha|u|^2g'(v)\textrm { in }\mathcal{D}'(]0,+\infty[\times\er).$$
Hence, by \eqref{4.2}, $$f(v^{\epsilon})-\alpha|u^{\epsilon}|^2g'(v^{\epsilon})\rightharpoonup f(v)-\alpha|u|^2g'(v)$$ in $L^{\infty}([0,+\infty[;L^{\frac 43}(\er))\textrm {weakly-$\ast$}$.
\end{proof}

\bigskip

\begin{Rem}
In the framework of Theorem \ref{T4.1}, if $v_0\geq 0$ a.e., we can apply Remark \ref{R3.1} (if the support of $g'$ is contained in $[0,+\infty[$ ) and, since $v^{\epsilon}\rightharpoonup v$ in $L^{\infty}([0,T[;L^4\cap L^2(\er))$ weakly-$\ast$,for each $T>0$, we can conclude, since $v^{\epsilon}\geq 0$ a.e. in $\er\times[0,+\infty[$, that 
$$v\geq 0\textrm{ a.e. in }\er\times[0,+\infty[.$$
Moreover, in this case, we can replace the condition $g\geq 0$ in $\er$ by the weaker condition $g\geq 0$ in $[0,+\infty[$.
\end{Rem}


\section{Existence of standing waves}\label{sec:4}

In this section we will study, by different techniques, the existence of bound states (more precisely, of standing waves) for \eqref{1.3} in the defocusing ($-|u|^2u$) and focusing ($+|u|^2u$) cases,  for $\alpha>0$, $g(v)=v+\rho$ with $\rho>0$,  $f(s)=as^2-bs^3$ with $a,b\geq 0$. In the focusing case, we will consider a minimization problem with an $L^p$ constraint. In the defocusing case, the special structure of the action functional will allow us to consider a minimization problem on a Nehari manifold (thus, solutions will actually be ground-states). 

Observe that the embedding $H^1_r(\R)\hookrightarrow L^q(\R)$ ($q>2$) is not compact, where $H^1_r$ denotes the space of radially symmetric functions (i.e. even functions) of $H^1(\R)$. However, if $u\in L^2(\R)$ is even and decreasing in $|x|$, it is easy to establish 
\[
|u(x)|\leq |x|^{-\frac{1}{2}}\|u\|_{L^2(R)}
\]
(see \cite[p. 341]{Lions1}). Hence, by Strauss' compactness lemma \cite{Strauss}, 
\begin{equation}\label{compactemb}
H^1_{rd}(\R)\hookrightarrow L^q(\R) \ (q> 2),\qquad \text{ is a compact embedding,}  
\end{equation}
 where $H^1_{rd}(\R)=\{u\in H^1_r(\R): u \text{ is decreasing with respect to } |x|\}$.

\subsection{The defocusing case: Proof of Theorem \ref{teo:GS1}}\label{sec:decofusing}

We will look for $(U,V)$ real solution of \eqref{eq:boundstates1} with $U\in H^1(\R)$. We solve the following (equivalent) problem, where a differential equation is coupled with a pointwise identity:
\begin{equation}
\label{eq:boundstates2}
\begin{cases}
U''-U^3-\alpha (V+\rho)U=0 & \text{ in } \R  \\
aV^2-bV^3=\alpha U^2        &\text{ in } \R \\
U(x),\ V(x)\to 0\ \text{ as } |x|\to \infty.
\end{cases}
\end{equation}
We split the proof of Theorem \ref{teo:GS1} in three cases: $a=0$ and $b>0$, $a>0$ and $b=0$, and $a,b>0$. 

\paragraph{Existence for $\mathbf{a=0}$ and some $\mathbf{b>0}$ (Theorem \ref{teo:GS1}-1.)}

In this situation, the second equation in \eqref{eq:boundstates2} is equivalent to
\[
V(x)=-\left(\frac{\alpha}{b}\right)^{1/3} U^{2/3}(x),
\]
hence we aim at solving
\begin{equation}\label{eq:singleequation1}
-U'' + \alpha \rho U -\frac{\alpha^{4/3}}{b^{1/3}} U^{5/3}+ U^3=0 \quad \text{ in } \R.
\end{equation}
Consider the $C^1$ functional $J:H^1(\R)\to \R$ defined as
\begin{equation}\label{eq:functionalJ}
J(U)=\frac{1}{2}\int_{\R} ((U')^2 + \alpha \rho  U^2) + \frac{1}{4}\int U^4,
\end{equation}
constrained to the manifold
\[
\mathcal{M}_1=\left\{U\in H^1(\R):\ \int |U|^{8/3}=1\right\}.
\]

Let us check that $\inf_{\mathcal{M}_1} J$ is achieved. In fact, $J\geq 0$, and we can take a minimizing sequence $(U_n)\subset H^1(\R)$:
\[
U_n\in \mathcal{M}_1,\qquad J(U_n)\to \inf_{\mathcal{M}_1}J.
\] 
By eventually replacing $U_n$ by $|U_n|^*$, the  Schwarz symmetrization of its absolute value, we can assume that $U_n\in H^1_{rd}(\R)$ and $U_n\geq 0$. Moreover,
\[
\frac{1}{2}\min\{1,\alpha \rho\}\|U_n\|_{H^1}^2\leq J(U_n) \leq C 
\]
and, since $\rho,\alpha>0$, $(U_n)_n$ is bounded in $H^1$--norm. Thus there exists $U\in H^1_{rd}(\R)$ such that, up to a subsequence,
$$
\begin{array}{l}
U_n \rightharpoonup U \text{ weakly in $H^1(\R)$},\\
U_n \to U \text{ strongly in $L^p(\R),\ \forall p> 2$}.
\end{array}
$$
(taking into account the compact embedding \eqref{compactemb}). So $U\geq 0$ and $U\in \mathcal{M}_1$;  since $J$ is lower-semicontinuous, $J(U)=\min_{\mathcal{M}_1} J$. Thus, there exists $\lambda\in \R$ (a Lagrange multiplier) such that
\begin{equation}\label{eq:singleequation1}
-U'' + \alpha \rho U -\lambda U^{5/3}+ U^3=0.
\end{equation}
Since $U\not\equiv 0$ ($U\in \mathcal{M}_1$) and $U\geq 0$, by the strong maximum principle we have $U>0$ in $\R$. Testing \eqref{eq:singleequation1} by $U$ itself, we have
\[
\lambda=\int ((U')^2+\alpha \rho U^2 + U^4)>0.
\] Therefore, we can choose $b>0$ in such a way that $\frac{\alpha^{4/3}}{b^{1/3}}=\lambda$. Defining $V(x):=-\left(\frac{\alpha}{b}\right)^{1/3} U^{2/3}$ we have that $(U,V)$ solves  \eqref{eq:boundstates2}, hence is a solution to \eqref{eq:boundstates1}. This proves Theorem \ref{teo:GS1}-1.

\paragraph{Existence for some $\mathbf{a>0}$ and $\mathbf{b=0}$ (Theorem \ref{teo:GS1}-2.)}

This case is very similar to the previous one, hence we just stress the diferences.  Since there are no solutions with $V>0$ (recall Remark \ref{rem_V>0}), we are lead to take 
\begin{equation}\label{eq:equation_auxili2}
V(x)=-\left(\frac{\alpha}{a}\right)^{1/2}U(x),
\end{equation}
and solve
\[
-U'' + \alpha \rho U -\frac{\alpha^{3/2}}{a^{1/2}}U^2 +U^3=0
\]
by minimizing the functional $J$ defined in \eqref{eq:functionalJ}, this time on the manifold
\[
\mathcal{M}_2=\left\{U\in H^1(\R):\ \int |U|^{3}=1\right\}.
\]
The constrained minimization problem leads to the existence of $U\in H^1_{rd}(\R)$ positive solution 
\begin{equation}\label{eq:equation_auxili}
-U'' + \alpha \rho U -\lambda  U^{2}+ U^3=0
\end{equation}
for some $\lambda>0$. Since $\alpha>0$ is fixed, we can choose $a>0$ in such a way that $\frac{\alpha^{3/2}}{a^{1/2}}=\lambda$. Therefore $(U,V)$, with $U$ solution of \eqref{eq:equation_auxili} and $V$ solution of \eqref{eq:equation_auxili2}, solves \eqref{eq:boundstates2}. This proves Theorem \ref{teo:GS1}-2.

\paragraph{Existence for some $\mathbf{a,b>0}$ (Theorem \ref{teo:GS1}-3.)}

This is the most challenging case. Observe that the polynomial $P(s):=s^2- s^3$ vanishes only at $s=0$ and $s=1$, being negative if and only if $s>1$. Moreover, $P$ achieves a local maximum at $P\left(\frac{2}{3}\right)=\frac{4}{27}$. We will obtain a solution $(U,V)$ with $U>0$ and $V<0$.

Consider the restriction $\tilde P:= P|_{]-\infty,0[}$ (which is invertible), and the continuous function
\[
g(s)=\begin{cases}
\tilde P^{-1}(s) & s>0\\
0	& s\leq 0
\end{cases}
\]
which is negative for $s>0$. Observe that, if $V:=g(\alpha U^2)$, then $V^2-V^3=\alpha U^2$. Moreover, 
\begin{equation}\label{eq:lim_g(s)}
\lim_{s\to 0^+} \frac{g(s)}{s^{1/2}}= \lim_{s\to +\infty} \frac{g(s)}{s^{1/3}}=  -1.
\end{equation}
We aim at solving the equation:
\[
-U''+ \alpha (g(\alpha U^2) +\rho) U +U^3=0
\]
and we will succeed to do it, up to a Lagrange multiplier. Define $G(s):=\int_0^s g(\xi)\, d\xi$, which is negative for $s>0$ and  satisfies
\begin{equation}\label{eq:asymptotics_of_G}
\lim_{s\to 0^+} \frac{G(s)}{s^{3/2}}= -\frac{2}{3},\qquad  \lim_{s\to +\infty} \frac{G(s)}{s^{4/3}}= - \frac{3}{4}.
\end{equation}
\begin{Lemma}\label{lemma:minimization_M3}
The following minimization problem has a nonnegative solution:
\[
\inf_{U\in \mathcal{M}_3} J(U), \quad \text{ with } \quad \mathcal{M}_3=\left\{U\in H^1(\R):\ \int G(\alpha U^2)=-1 \right\}.
\]
\end{Lemma}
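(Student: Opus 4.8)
The plan is to solve this constrained minimization by the direct method, in the same spirit as the cases $a=0$ and $b=0$ treated above, the new difficulty being the more intricate constraint functional $U\mapsto\int G(\alpha U^2)$. First I would record the two elementary facts that $J\geq 0$ (since $\alpha,\rho>0$ all three terms are nonnegative) and that $\mathcal{M}_3$ is nonempty and contains only nontrivial functions: indeed $G(s)<0$ for $s>0$ and $G(0)=0$, so $\int G(\alpha U^2)\in[-\infty,0]$, with value $0$ only when $U\equiv 0$, and a simple scaling shows the level $-1$ is attained. I then pick a minimizing sequence $(U_n)\subset\mathcal{M}_3$ with $J(U_n)\to m:=\inf_{\mathcal{M}_3}J$.

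Next comes symmetrization. Since $t\mapsto G(\alpha t^2)$ is a Borel function vanishing at $0$, Schwarz symmetrization of $|U_n|$ preserves $\int G(\alpha U_n^2)$ (rearrangement preserves the distribution function of $|U_n|$), preserves $\int U_n^2$ and $\int U_n^4$, and does not increase $\int (U_n')^2$ by the P\'olya--Szeg\H{o} inequality. Hence replacing $U_n$ by $|U_n|^*$ keeps the sequence inside $\mathcal{M}_3$ and still minimizing, so I may assume $U_n\in H^1_{rd}(\R)$ with $U_n\geq 0$. As before, $\tfrac12\min\{1,\alpha\rho\}\|U_n\|_{H^1}^2\leq J(U_n)\leq C$ yields boundedness in $H^1$, so up to a subsequence $U_n\rightharpoonup U$ weakly in $H^1$ with $U\in H^1_{rd}(\R)$, $U\geq 0$, and, by the compact embedding \eqref{compactemb}, $U_n\to U$ strongly in $L^p(\R)$ for every $p>2$.

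The crux of the argument, and the step I expect to be the main obstacle, is to verify that the constraint survives the weak limit, i.e.\ $U\in\mathcal{M}_3$ (in particular $U\not\equiv 0$). Here the asymptotics \eqref{eq:asymptotics_of_G} are essential: combined with the continuity of $G$ they give a global growth bound $|G(\alpha U^2)|\leq C(|U|^3+|U|^{8/3})$, with both exponents strictly above $2$. Since $U_n\to U$ strongly in $L^3$ and in $L^{8/3}$, I would extract a further subsequence converging a.e.\ and dominated in each of these spaces (the standard ``converse'' to dominated convergence for $L^p$-convergent sequences), producing a fixed $L^1$ majorant for $|G(\alpha U_n^2)|$. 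Continuity of $G$ then gives $G(\alpha U_n^2)\to G(\alpha U^2)$ a.e., and dominated convergence yields $\int G(\alpha U_n^2)\to\int G(\alpha U^2)=-1$; equivalently, this is the continuity of the Nemytskii operator $U\mapsto G(\alpha U^2)$ from $L^3\cap L^{8/3}$ into $L^1$. This forces $U\in\mathcal{M}_3$ and $U\ne 0$.

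Finally, weak lower semicontinuity closes the argument. The term $\int(U')^2$ is weakly lower semicontinuous by convexity, $\int U^2$ is weakly lower semicontinuous under weak $L^2$ convergence, and $\int U_n^4\to\int U^4$ by the strong $L^4$ convergence ($4>2$); hence $J(U)\leq\liminf_n J(U_n)=m$. Since $U\in\mathcal{M}_3$ we also have $J(U)\geq m$, so $J(U)=m$ and $U\geq 0$ is the desired nonnegative minimizer. All steps except the passage to the limit in the constraint are routine and parallel the previous two cases; the growth estimate and the compact embedding are exactly what makes that passage work.
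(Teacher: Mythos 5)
Your proof is correct and follows essentially the same route as the paper's: symmetrize to a nonnegative, radially decreasing minimizing sequence, get $H^1$ boundedness from $J$, use the compact embedding $H^1_{rd}(\R)\hookrightarrow L^p(\R)$ for $p>2$, pass to the limit in the constraint via the growth bound $|G(\alpha s^2)|\leq C(|s|^3+|s|^{8/3})$ and dominated convergence with $L^3$/$L^{8/3}$ majorants, and conclude by weak lower semicontinuity of $J$. The only (harmless) variation is in checking $\mathcal{M}_3\neq\emptyset$: you normalize the constraint to $-1$ by a dilation scaling, whereas the paper fixes a positive $w$ and shows $t\mapsto\int G(\alpha t w^2)$ is continuous, vanishes at $0$, and tends to $-\infty$, then invokes the intermediate value theorem.
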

\begin{proof} 1) We start by checking that $\mathcal{M}_3\neq \emptyset$. Fix $w\in H^1 (\R)$ a positive, radially decreasing function, and take:
 \[
 \varphi(t):=\int_\R G(\alpha t w^2).
 \] Since $\varphi$ is continuous and $\varphi(0)=0$, the claim follows if we prove that $\varphi(t)\to -\infty$ as $t\to +\infty$. From \eqref{eq:asymptotics_of_G}, there exists $A>0$ such that
\begin{equation*}
G(s)\leq -A s^{4/3} \qquad \forall s\geq \alpha w^2(1).
\end{equation*}
Thus, for $x\in [-1,1]$ and $t\geq 1$, $\alpha t w^2(x) \geq \alpha w^2(1)$ and so $G(\alpha t w^2(x))\leq -A \alpha^{4/3} t^{4/3}w^{8/3}(x)$. Therefore, since $G\leq 0$,
\[
\varphi(t)\leq \int_{-1}^1 G(\alpha t w^2(x))\leq -A t^{4/3}\int_{-1}^1 \alpha^{4/3} w^{8/3}(x) \to -\infty,
\]
as $t\to +\infty$.
\smallbreak

\noindent 2) Reasoning exactly as in the proof of the case $a=0$, $b>0$, we can take a minimizing sequence of nonnegative, radially decreasing functions: $(U_n)\in  H^1_{rd}(\R)$ such that $U_n\in \mathcal{M}_3$ and $J(U_n)\to \inf_{\mathcal{M}_3}J$. This sequence is bounded in $H^1(\R)$, thus there exists $U\in H^1_{rd}(\R)$, nonnegative, such that, up to a subsequence, $U_n\to U$ weakly in $H^1(\R)$, strongly in $L^p(\R)$, for $p>2$. From the strong convergence in $L^3(\R)$ and $L^{8/3}(\R)$, there exist $h_1\in L^3(\R)$, $h_2\in L^{8/3}(\R)$ such that 
$|U_n|\leq h_1,h_2$ for every   $n\in \N$. Moreover, since $|G(s)|\leq C (|s|^{3/2}+|s|^{4/3})$, then 
\[
|G(\alpha U_n^2)|\leq C' (|U_n|^3+|U_n|^{8/3})\leq C'(h_1^3+h_2^{8/3})\in L^1(\R),
\] and by Lebesgue's Dominated Convergence Theorem we have
\[
-1=\lim \int G(\alpha U_n^2)=\int G(\alpha U^2).
\]
Therefore the proof of this lemma is complete, by observing that
\[
\inf_{\mathcal{M}_3} J\leq J(U) \leq \liminf J(U_n) = \inf_{\mathcal{M}_3} J. \qedhere
\]
\end{proof}

After the previous lemma, we are ready to prove the  existence result for some $a,b>0$.

\medbreak

\begin{proof}[Proof of Theorem \ref{eq:boundstates1}-3]
The previous lemma yields the existence of a nontrivial $H^1(\R)$--solution (which is nonnegative and radially decreasing) to the problem
\[
-U''+ \alpha (\lambda g(\alpha U^2)+\rho)U+U^3=0 \text{ in } \R,
\]
for some $\lambda\in \R$. The strong maximum principle yields that $U>0$. By testing the equation by $U$ itself, we see that
\[
\int_\R ((U')^2 +\alpha \rho U^2+U^4) + \lambda \int_\R \alpha g(\alpha U^2)U^2 =0,
\]
and since $g(\alpha U^2)<0$, then actually $\lambda>0$.  Take $V(x):=\lambda g(\alpha U^2)<0$. Then, by definition of $g$,
\[
\frac{1}{\lambda^2}V^2-\frac{1}{\lambda^3} V^3=P(\frac{1}{\lambda} V)=\alpha U^2.
\]
In conclusion, $(U,V)$ solves \eqref{eq:boundstates3} with the choice $a:=\frac{1}{\lambda^2}>0$, $b=\frac{1}{\lambda^3}>0$.
\end{proof}
\subsection{The focusing case: Proof of Theorem \ref{teo:GS2}}

We will now look for $(U,V)$ real solutions of \eqref{eq:boundstates3}, solving instead the following (equivalent) problem:
\begin{equation}
\label{eq:boundstates4}
\begin{cases}
U''+U^3-\alpha (V+\rho)U=0 & \text{ in } \R  \\
aV^2-bV^3=\alpha U^2        &\text{ in } \R \\
U(x),\ V(x)\to 0\ \text{ as } |x|\to \infty.
\end{cases}
\end{equation}

The results in this case are more complete, since we can use a Nehari manifold/Mountain pass approach instead of an $L^p$ constraint. As in the defocusing case, we split the proof of Theorem \ref{teo:GS2} in three cases.

\paragraph{Existence for $\mathbf{a=0}$ and $\mathbf{b>0}$ (Theorem \ref{teo:GS2}-1.)}

In this situation, the second equation in \eqref{eq:boundstates4} is equivalent to
\[
V(x)=-\left(\frac{\alpha}{b}\right)^{1/3} U^{2/3}(x),
\]
hence we aim at solving
\begin{equation}\label{eq:singleequation1}
-U'' + \alpha \rho U =\frac{\alpha^{4/3}}{b^{1/3}} U^{5/3}+ U^3.
\end{equation}

Weak solutions of \eqref{eq:singleequation1} correspond to critical points of the $C^1$-action functional $A:H^1(\R)\to \R$ defined by
\[
A(U)=\frac{1}{2}\int \left((U')^2 + \rho \alpha U^2\right) - \frac{3\alpha^{4/3}}{8b^{1/3}}\int |U|^{8/3} - \frac{1}{4}\int U^4.
\]  
We introduce the Nehari set
\begin{align*}
\mathcal{N}&=\{U\in H^1(\R):\ U\neq 0,\ A'(U)[U]=0\}\\
			& =\left\{U\in H^1(\R):\ U\neq 0,\ \int ((U')^2+\rho\alpha U^2)=\frac{\alpha^{4/3}}{b^{1/3}}\int |U|^{8/3}+\int U^4\right\}.
\end{align*}

\begin{proof}[Proof of Theorem \ref{teo:GS2}-1]
We will prove this result by showing that the quantity
\[
\inf_{U\in \mathcal{N}} A(U)
\]
is a critical level of $A$, being achieved by a positive solution of \eqref{eq:singleequation1}.
Although this fact follows from standard arguments, we sketch the proof since we are dealing with an unbounded set - $\R$ - and working with one space dimension. Since the proof is long, we split it in several steps.

\smallbreak

\noindent 1) Given $U\in H^1(\R)\setminus \{0\}$, let us check the existence of $t>0$ such that $tU \in \mathcal{N}$. Consider the map $\varphi_U:[0,+\infty[\to \R$ defined as
\[
\varphi_U(t):=A(tU)=\frac{t^2}{2}\int((U')^2+\rho \alpha U^2)-t^{8/3} \frac{3\alpha^{4/3}}{8 b^{1/3}}\int |U|^{8/3}-\frac{t^4}{4} \int U^4.
\]
We have $\varphi(0)=0$, $\varphi(t)\to -\infty$ as $t\to +\infty$, and $\varphi(t)>0$ for $t>0$ sufficiently small. Then $\varphi_U(t)$ admits a critical point at $t^*>0$, which corresponds to a point $t^*U\in \mathcal{N}$ (it is actually simple to see that $t^*$ is the unique positive critical point of $\varphi_U$, corresponding to its global maximum). An important observation that we use ahead is that, if in addition $A'(U)[U]\leq 0$, then $\varphi_U'(1)\leq 0$, and so $t^*\leq 1$.

\smallbreak

\noindent 2) The set $\mathcal{N}$ is a $C^1$-manifold. In fact, for $F(U):=A'(U)[U]$ with $U\in \mathcal{N}$, we have
\[
\begin{split}
F'(U)[U] &= 2\int((U')^2+\rho \alpha U^2)-\frac{8\alpha^{4/3}}{3b^{1/3}}\int |U|^{8/3}-4\int U^4\\
		&= -\frac{2}{3}\frac{\alpha^{4/3}}{b^{1/3}}\int |U|^{8/3}-2 \int |U|^4<0.
\end{split}
\]
Moreover, this implies that constrained critical points are free critical points: for $U\in \mathcal{N}$ such that $A|_\mathcal{N}'(U)=0$, there exists $\lambda\in \R$, a Lagrange multiplier, such that $A'(U)=\lambda F'(U)$. Using $U$ as test function, we see that $\lambda=0$, thus $A'(U)=0$.
\smallbreak

\noindent 3) Combining the Sobolev embeddings $H^1(\R)\hookrightarrow L^{8/3}(\R),L^4(\R)$ with the definition of $\mathcal{N}$, we deduce the existence of  $C_1,C_2>0$ such that
\[
 (\|U\|_{8/3}^2+\|U\|_4^2) \leq  C_1 \|U\|_{H^1}^2 \leq C_2 (\|U\|^{8/3}_{8/3}+\|U\|_4^4) \quad \forall U\in \mathcal{N}.
\]
Since $2<8/3<4$,  there exists $\delta>0$ such that
\begin{equation}\label{eq:N_bded_below}
\|U\|_{8/3}+\|U\|_4 \geq \delta \qquad \forall U\in H^1(\R).
\end{equation}
\noindent 4) For $U\in \mathcal{N}$, we have
\begin{equation}\label{eq:A(U)_alternatize}
A(U)=\frac{\alpha^{4/3}}{8 b^{1/3}}\int |U|^{8/3} + \frac{1}{4}\int |U|^4>0,
\end{equation}
thus $\inf_{\mathcal{N}} A \geq 0$. 

\smallbreak

\noindent 5) We are now ready to prove the existence of a minimizer using direct methods. Take a minimizing sequence $(U_n)\subset H^1(\R)$: $U_n\in \mathcal{N}$ such that $A(U_n)\to \inf_{\mathcal{N}} A$. From \eqref{eq:A(U)_alternatize}, this sequence is bounded  in $L^{8/3}(\R)$ and $L^{4}(\R)$; since $U_n\in \mathcal{N}$, then the sequence is also bounded in $H^1(\R)$. Take the Schwarz symmetrization $|U_n|^*$, and let $t_n>0$ be such that $t_n |U_n|^*\in \mathcal{N}$ (recall Step 1). Since $\||U_n|^*\|_{H^1}\leq \|U_n\|_{H^1}$ and $\||U_n|^*\|_{p}=\|U_n\|_p$ for $p\geq 1$, then $A'(|U_n|^*)|U_n|^*\leq A'(U_n)[U_n]= 0$, and $t_n\leq 1$ by Step 1. From \eqref{eq:A(U)_alternatize}, we see directly that $J(t_n |U_n|^*)\leq J(U_n)$. So, $(t_n |U_n|^*)$ is also a minimizing sequence, being radially decreasing, nonnegative, and bounded in $H^1(\R)$. We denote this new sequence again by $U_n$. 

In conclusion, there exists $U\in H^1_{rd}(\R)$, nonnegative, such that (up to a subsequence) $U_n\rightharpoonup U$ weakly in $H^1(\R)$. From \eqref{compactemb}, the converge is strong in $L^p(\R)$, for every $p>2$. Step 3 yields that $U\not\equiv 0$.

Finally, since $A(U)[U]\leq 0$, we may take $0<t\leq 1$ such that $tU\in \mathcal{N}$, and 
\begin{align*}
\inf_\mathcal{N} A \leq A(tU)  &=\frac{\alpha^{4/3} t^{8/3}}{8 b^{1/3}}\int |U|^{8/3} + \frac{t^{4}}{4}\int |U|^4 \leq \frac{\alpha^{4/3}}{8 b^{1/3}}\int |U|^{8/3} + \frac{1}{4}\int |U|^4\\
		 &=\lim_n \frac{\alpha^{4/3}}{8 b^{1/3}}\int |U_n|^{8/3} + \frac{1}{4}\int |U_n|^4=\lim_n A(U_n)=\inf_\mathcal{N} A.
\end{align*}
In particular $t=1$, $U\in \mathcal{N}$ and $A(U)=\inf_\mathcal{N} A$. By Step 2 we deduce that $A'(U)=0$, that is, $U$ solves \eqref{eq:singleequation1}. Since $U\not\equiv 0$, then $U>0$ by the strong maximum principle.

\smallbreak

\noindent 6) In conclusion, the pair $(U,V)$, for $U$ positive solution of  \eqref{eq:singleequation1} and $V=-\left(\frac{\alpha}{b}\right)^{1/3} U^{2/3}$ solve \eqref{eq:boundstates4}, which proves Theorem \ref{teo:GS2}-1.
\end{proof}

\subsection*{Existence for  $\mathbf{a>0}$ and $\mathbf{b=0}$ (Theorem \ref{teo:GS2}-2.)}

In this case  the second equation in \eqref{eq:boundstates4} yields
\[
V(x)=\pm \left(\frac{\alpha}{a}\right)^{1/2} U(x),
\]
and for this reason we obtain two pairs of solutions.

The proof of Theorem \ref{teo:GS2}-2 follows the lines of the previous case $a=0$, $b>0$, with very few changes. We are lead this time to the problems
\begin{equation}\label{eq:equation_pm}
-U''+\alpha \rho U= \pm \left(\frac{\alpha}{a}\right)^{1/2}U^2+U^3,\qquad U\in H^1(\R),
\end{equation}
with associated action functionals
\[
A_{\pm}(U):=\frac{1}{2}\int \left( (U')^2+\alpha \rho U^2\right)\mp \int \frac{1}{3}\left(\frac{\alpha}{a}\right)^{1/2}U^3-\frac{1}{4}\int |U|^4.
\]
Unlike the sign of the cubic term, the sign of the quadratic term $\left(\frac{\alpha}{a}\right)^{1/2}s^2$ in \eqref{eq:equation_pm} is not important: since it is an $\text{o}(s)$ as $s\to 0$, and is dominated in absolute value by $C(1+|s|^3)$ for all $s\in \R$, the proof is analogous to the one of Theorem \ref{teo:GS2}-1. Solutions are critical points associated to the critical levels 
\[
c_{\pm}=\{U\in H^1_{\rm rad}(\R):\ U\neq 0,\  A'_\pm(U)[U]=0\}. \qedhere
\]

\subsection*{Existence for $\mathbf{a,b>0}$ and $\alpha>0$ small}

The remainder of the paper is  devoted to the proof of Theorem \ref{teo:GS2}-3.  This result follows directly from Theorem \ref{thm:auxteo1} and Theorem \ref{thm:auxteo2} below. First, we prove the existence of a solution whose components have different signs.

\begin{Teorema}\label{thm:auxteo1}  Take $\rho,a,b>0$. Then, for sufficiently small $\alpha>0$, \eqref{eq:boundstates4} admits a solution $(U,V)$, with $U\in H^1(\R)$ and $U>0$, $V<0$ in $\R$.
\end{Teorema}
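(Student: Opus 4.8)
The plan is to decouple the system by solving the algebraic constraint on its negative branch, reduce \eqref{eq:boundstates4} to a single scalar equation, and then produce the solution by bifurcation from the explicit ground state of $-W''+W=W^3$. First I would observe that $Q(s):=as^2-bs^3$ satisfies $Q'(s)=s(2a-3bs)<0$ for every $s<0$, so $Q$ restricts to a strictly decreasing bijection from $]-\infty,0[$ onto $]0,+\infty[$. Writing $h$ for its inverse, the choice $V:=h(\alpha U^2)<0$ solves $aV^2-bV^3=\alpha U^2$ for every $U$, and the first equation of \eqref{eq:boundstates4} becomes
\begin{equation*}
-U''+\alpha\rho U=U^3-\alpha h(\alpha U^2)U\qquad\text{in }\R.
\end{equation*}
Inverting $Q$ near its two leading monomials gives $h(w)=-(w/a)^{1/2}+o(w^{1/2})$ as $w\to0^+$ and $h(w)=-(w/b)^{1/3}+o(w^{1/3})$ as $w\to+\infty$; in particular $-\alpha h(\alpha U^2)U>0$ for $U>0$, so the reduced equation is genuinely focusing.

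Second, I would rescale to make the role of $\alpha$ explicit. Setting $\mu:=\sqrt{\alpha\rho}$ and $U(x)=\mu\,W(\mu x)$, the reduced equation is equivalent to
\begin{equation*}
-W''+W=W^3-\frac1\rho h(\alpha^2\rho W^2)W\qquad\text{in }\R.
\end{equation*}
Using the behaviour of $h$ near $0$, the last term equals $\frac{\alpha}{\sqrt{a\rho}}|W|W+o(\alpha)$ on bounded subsets of $H^1(\R)$, hence is an $O(\alpha)$ focusing perturbation of the limiting equation $-W''+W=W^3$, whose unique positive even solution is $W_0(x)=\sqrt2\,\sech(x)$.

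Third, I would run the implicit function theorem on the even subspaces. Let $\Phi(\alpha,W):=-W''+W-W^3+\frac1\rho h(\alpha^2\rho W^2)W$, viewed as a $C^1$ map from a neighbourhood of $(0,W_0)$ in $\R\times H^2_r(\R)$ into the space of even functions $L^2_r(\R)$ (the subscript $r$ as in the definition of $H^1_r$). Then $\Phi(0,W_0)=0$ and $\partial_W\Phi(0,W_0)=L_0:=-\partial_{xx}+1-3W_0^2$, the classical linearization at the soliton, with $\ker L_0=\mathrm{span}\{W_0'\}$; since $W_0'$ is odd, $L_0$ restricts to an isomorphism $H^2_r(\R)\to L^2_r(\R)$. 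The implicit function theorem yields $\alpha_0>0$ and a branch $\alpha\mapsto W_\alpha$ with $W_\alpha\to W_0$ in $H^2(\R)$; since $H^1\hookrightarrow L^\infty$, $W_\alpha>0$ for $\alpha$ small, by closeness to $W_0$ together with the strong maximum principle applied to the linear Schr\"odinger equation it satisfies. Undoing the scaling gives $U(x)=\mu\,W_\alpha(\mu x)\in H^1_{rd}(\R)$ positive, and $V:=h(\alpha U^2)<0$; decay at infinity follows from $U\in H^1\hookrightarrow C_0$ and $h(0)=0$. This is exactly where ``$\alpha$ small'' is consumed, and the same framework applied to the small positive root of $Q(V)=\alpha U^2$ (which exists only while $\alpha\|U\|_\infty^2<\frac{4a^3}{27b^2}$, again forcing $\alpha$ small) will produce the companion $V>0$ solution of Theorem \ref{thm:auxteo2}.

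The main obstacle I anticipate is the limited regularity of the reduced nonlinearity: because $h(w)\sim-(w/a)^{1/2}$, the Nemytskii operator $W\mapsto h(\alpha^2\rho W^2)W$ behaves like $-\tfrac{\alpha}{\sqrt a}|W|W$ near the zero set of $W$, hence is of class $C^1$ but not $C^2$; one must verify carefully that $\Phi$ is $C^1$ as a map $H^2_r\to L^2_r$ (using $\|(|W_n|-|W|)V\|_2\le C\|W_n-W\|_4\|V\|_{H^1}$ for continuity of the differential) before invoking the implicit function theorem. A robust alternative that sidesteps this is to run, for each fixed small $\alpha$, the Nehari/minimization scheme of the proof of Theorem \ref{teo:GS2}-1 on the reduced equation: the fibering map $t\mapsto A(tW)$ has a unique maximum since $s\mapsto(s^3-\alpha h(\alpha s^2)s)/s=s^2-\alpha h(\alpha s^2)$ is strictly increasing, the action is positive on the Nehari manifold (because $H(w)\ge h(w)w$, where $H(w)=\int_0^w h$), and compactness is recovered through Schwarz symmetrization and the compact embedding \eqref{compactemb}; positivity of the minimizer again follows from the strong maximum principle and yields $V=h(\alpha U^2)<0$.
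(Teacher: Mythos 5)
Your proposal is correct in substance, but it solves the reduced scalar equation by a genuinely different method than the paper. The decoupling step is identical: the paper also inverts $f(s)=as^2-bs^3$ on its negative branch (its function $g$ in \eqref{eq:definition_g} is your $h$) and arrives at exactly your reduced equation \eqref{eq:originalequation}. From there the paper stays variational: it verifies the mountain-pass geometry of the functional $A$ and invokes \cite[Theorem 1.15]{MW} to produce a Palais--Smale sequence, proves boundedness of that sequence for $\alpha$ small via the combination $A-\tfrac38 A'[\cdot]$ together with the growth bounds \eqref{eq:g(s)}--\eqref{eq:G(s)}, and recovers compactness by the translation/vanishing argument (\cite[Lemma 1.21]{MW}) before applying the strong maximum principle. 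Your main route instead rescales by $\mu=\sqrt{\alpha\rho}$ and treats the problem as an $O(\alpha)$ perturbation of $-W''+W=W^3$, solved by the implicit function theorem on even subspaces using the classical non-degeneracy $\ker L_0=\mathrm{span}\{W_0'\}$. This buys a lot: all concentration-compactness issues disappear (the even subspace kills translations and the IFT is local), you get a continuous branch $\alpha\mapsto W_\alpha$ near the explicit soliton, and as a bonus $\alpha\|U\|_\infty^2=O(\alpha^2)$, which would make the delicate $L^\infty$ estimate that the paper needs for Theorem \ref{thm:auxteo2} (via comparison of critical levels, Lemma \ref{4.9}) nearly automatic. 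The price is the technical care you partly anticipate, and two points deserve to be made explicit. First, $\Phi$ is \emph{not} differentiable in $\alpha$ at $\alpha=0$: since $h(\alpha^2\rho W^2)\sim -|\alpha|\sqrt{\rho/a}\,|W|$, the dependence on the parameter is only Lipschitz, so you must use the parametrized version of the IFT that requires joint continuity of $(\alpha,W)\mapsto\partial_W\Phi(\alpha,W)$ and mere continuity in $\alpha$ (this version holds, via the uniform contraction principle, and the needed continuity follows from the pointwise bound $|h(w)+2wh'(w)|\leq C\alpha|W|$ in the regime $\alpha\|W\|_\infty$ small). Second, positivity of $W_\alpha$ does not follow from $L^\infty$-closeness to $W_0$ alone, since $W_0$ decays at infinity; you need to add an exterior maximum principle (on $|x|\geq R$ the zeroth-order coefficient $1-W_\alpha^2+\rho^{-1}h(\alpha^2\rho W_\alpha^2)$ is positive and $W_\alpha(\pm R)>0$, $W_\alpha\to 0$), or a test-function argument with $W_\alpha^-$, before invoking the strong maximum principle. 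Also, the IFT gives evenness but not monotonicity, so the claim $U\in H^1_{rd}(\R)$ is unjustified --- though irrelevant, as the theorem only requires $U\in H^1(\R)$, $U>0$, $V<0$. Your fallback (Nehari manifold plus Schwarz symmetrization, in the spirit of the paper's proof of Theorem \ref{teo:GS2}-1) is also viable, and your key inequality is right: on the Nehari set, substituting the constraint gives $A(U)=\tfrac14\int U^4+\tfrac12\int\bigl(H(\alpha U^2)-h(\alpha U^2)\alpha U^2\bigr)>0$, where the second integrand is nonnegative precisely because $h$ is decreasing; one additionally needs the monotonicity of $w\mapsto H(w)-wh(w)$ to show symmetrization-plus-reprojection does not increase the action. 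The paper presumably chose the mountain-pass/translation route to avoid these monotonicity verifications, at the cost of a less elementary compactness argument.
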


\medbreak

In order to prove this result, let $\tilde f:= f|_{]-\infty,0[}$, where $f(t)=at^2-bt^3$, and  take the function
\begin{equation}\label{eq:definition_g}
g(s)=\begin{cases}
\tilde f^{-1}(s) & s>0\\
0	& s\leq 0.
\end{cases}
\end{equation}
which is negative for $s>0$. The asymptotic behavior at the origin and at plus infinity is
\begin{equation}\label{eq:lim_g(s)2}
\lim_{s\to 0^+} \frac{g(s)}{s^{1/2}}= -\frac{1}{\sqrt{a}},\qquad  \lim_{s\to +\infty} \frac{g(s)}{s^{1/3}}=  -\frac{1}{\sqrt[3]{b}}.
\end{equation}
In particular, there exists $c_1,c_2>0$, depending only on $a,b$, such that
\begin{equation}\label{eq:g(s)}
 |g(s)| \leq  c_1 (s^{1/2}+s^{1/3})
\end{equation}
\begin{equation}\label{eq:G(s)}
|G(s)| \leq  c_2 (s^{3/2}+s^{4/3})
\end{equation}
for every $s>0$. Consider the problem
\begin{equation}\label{eq:originalequation}
-U''+ \alpha 
\rho U=-g(\alpha U^2) \alpha U +U^3,\quad U\in H^1(\R),
\end{equation}
and the associated functional
\[
A(U):=\frac{1}{2}\int \left( (U')^2+\alpha \rho U^2\right) + \frac{1}{2}\int G(\alpha (U^+)^2)-\frac{1}{4}\int (U^+)^4, \quad U\in H^1(\R),
\]
where $G(t)=\int_0^t g(\xi)\, d\xi$. 

\begin{Lemma}\label{lemma:lastaux1}  Nontrivial critical points of $A$ are positive solutions of \eqref{eq:originalequation}. 
\end{Lemma}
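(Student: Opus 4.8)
The plan is to compute the Euler--Lagrange equation satisfied by a critical point $U$ of $A$, and then to show in two steps that $U\geq 0$ and finally $U>0$, so that the positive-part truncations appearing in $A$ disappear and $U$ solves \eqref{eq:originalequation}. First I would record that $A$ is $C^1$: since $g$ is continuous with $g(0)=0$ (by \eqref{eq:lim_g(s)2}), both $s\mapsto G(\alpha(s^+)^2)$ and $s\mapsto (s^+)^4$ are $C^1$, with derivatives $2\alpha g(\alpha (s^+)^2)s^+$ and $4(s^+)^3$ respectively. Hence, for every $\phi\in H^1(\R)$,
\[
A'(U)[\phi]=\int U'\phi' + \alpha\rho\int U\phi + \alpha\int g(\alpha (U^+)^2)U^+\phi - \int (U^+)^3\phi,
\]
so that a critical point solves, in the weak sense,
\[
-U'' + \alpha\rho U + \alpha g(\alpha (U^+)^2)U^+ - (U^+)^3 = 0.
\]

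The key step is to test this identity with $\phi=U^-:=\max(-U,0)\geq 0$. Using $U=U^+-U^-$ together with the a.e.\ orthogonality relations $U^+U^-=0$ and $(U^+)'(U^-)'=0$, the two nonlinear terms vanish (each carries a factor $U^+U^-=0$), and what remains is
\[
-\int ((U^-)')^2 - \alpha\rho\int (U^-)^2 = 0.
\]
Since $\alpha\rho>0$, this forces $U^-\equiv 0$, i.e.\ $U\geq 0$. Consequently $U^+=U$, and the weak equation becomes exactly the weak formulation of
\[
-U'' + \alpha\rho U = -\alpha g(\alpha U^2)U + U^3.
\]
Since $U\in H^1(\R)\hookrightarrow C(\R)$ and $s\mapsto g(\alpha s^2)s$ is continuous, a standard regularity bootstrap gives $U\in C^2(\R)$, so $U$ is a classical solution of \eqref{eq:originalequation}.

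Finally I would upgrade $U\geq 0$ to $U>0$ by the strong maximum principle. For $U\geq 0$ one has $g(\alpha U^2)\leq 0$ (as $g<0$ on $]0,+\infty[$ and $g(0)=0$), so $-\alpha g(\alpha U^2)U\geq 0$ and $U^3\geq 0$; hence
\[
-U'' + \alpha\rho U \geq 0 \quad\text{in }\R,
\]
that is, $U$ is a nonnegative supersolution of $-\partial_{xx}+\alpha\rho$ with $\alpha\rho>0$. As $U$ is nontrivial by hypothesis, the strong maximum principle excludes an interior zero and yields $U>0$ throughout $\R$. The main technical obstacle lies precisely in the rigorous handling of the positive-part truncation: one must justify the differentiation formulas for $\int G(\alpha(U^+)^2)$ and $\int (U^+)^4$ and the a.e.\ identities $U^+U^-=0$, $(U^+)'(U^-)'=0$ that make the test-function computation collapse onto the coercive quadratic form. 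Once these are in place, both the sign conclusion $U\geq 0$ and the strict positivity $U>0$ follow cleanly.
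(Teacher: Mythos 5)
Your proof is correct and takes essentially the same route as the paper: derive the Euler--Lagrange equation with the $U^+$ truncations, test with $U^-$ to get $\int\bigl(((U^-)')^2+\alpha\rho (U^-)^2\bigr)=0$ and hence $U\geq 0$, then conclude $U>0$ by the strong maximum principle. The extra details you supply (the $C^1$ character of $A$, the a.e.\ identities $U^+U^-=0$ and $(U^+)'(U^-)'=0$, and the explicit supersolution inequality $-U''+\alpha\rho U\geq 0$) just make explicit what the paper's shorter proof leaves implicit.
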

\begin{proof} If $A'(U)=0$ with $U\not\equiv 0$, then 
\[
-U''+ \alpha \rho U=- g(\alpha (U^+)^2) \alpha U^++(U^+)^3.
\]
Multiplying this equation by $U^-$ and integrating by parts yields
\[
\int ((U')^2+\rho \alpha (U^-)^2)=0,
\] so $U^-\equiv 0$ and $U\geq 0$. Since $U\not\equiv 0$,  the strong maximum principle implies that $U>0$, hence $U$ is a positive solution of \eqref{eq:originalequation}. 
\end{proof}

\begin{Lemma}\label{lemma:PS_aux} There exists $(U_n)\subset H^1(\R)$ and $c>0$ such that
\begin{equation}\label{eq:Palais-Smale}
A(U_n)\to c,\qquad A'(U_n)\to 0 \quad \text{ in } H^{-1}(\R).
\end{equation}
\end{Lemma}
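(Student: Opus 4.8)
The statement to prove is Lemma \ref{lemma:PS_aux}: the existence of a Palais--Smale sequence for the functional $A$ at a positive level $c>0$. My plan is to produce this sequence via the Mountain Pass Theorem, so the whole task reduces to verifying that $A$ has the mountain pass geometry. First I would record the growth bounds already established in \eqref{eq:g(s)} and \eqref{eq:G(s)}, namely $|G(s)|\leq c_2(s^{3/2}+s^{4/3})$ for $s>0$, since these control the term $\frac{1}{2}\int G(\alpha(U^+)^2)$ appearing in $A$. Note that $G(s)<0$ for $s>0$ (because $g<0$ there), so this term is \emph{non-positive}; nonetheless its absolute value is dominated by $\int(|U|^3+|U|^{8/3})$ up to constants, which is what lets me treat it as a subcritical lower-order perturbation of the dominant quartic $-\frac14\int(U^+)^4$.

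The two geometric conditions I must check are the following. For the local minimum structure near the origin, I would estimate, for $U$ with small $H^1$-norm,
\[
A(U)\geq \frac{1}{2}\min\{1,\alpha\rho\}\|U\|_{H^1}^2 - C\left(\|U\|_{H^1}^{8/3}+\|U\|_{H^1}^3+\|U\|_{H^1}^4\right),
\]
using the Sobolev embeddings $H^1(\R)\hookrightarrow L^{8/3},L^3,L^4$ to bound the lower-order terms; since the negative terms have homogeneity strictly greater than $2$, there exist $r,\eta>0$ with $A(U)\geq \eta>0$ whenever $\|U\|_{H^1}=r$. For the second condition I fix a nonnegative $w\in H^1(\R)$, $w\neq 0$, and examine $A(tw)$ as $t\to+\infty$: the quartic term gives $-\frac{t^4}{4}\int w^4\to -\infty$, and since $G\leq 0$ the $G$-term only helps, so $A(tw)\to-\infty$; thus there is $e=t_0 w$ with $\|e\|_{H^1}>r$ and $A(e)<0$. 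With these two facts the mountain pass level
\[
c:=\inf_{\gamma\in\Gamma}\max_{t\in[0,1]} A(\gamma(t)),\qquad \Gamma=\{\gamma\in C([0,1];H^1(\R)):\gamma(0)=0,\ \gamma(1)=e\},
\]
satisfies $c\geq\eta>0$, and the Mountain Pass Theorem (in the version not assuming Palais--Smale a priori, e.g.\ Ambrosetti--Rabinowitz) delivers a sequence $(U_n)$ with $A(U_n)\to c$ and $A'(U_n)\to 0$ in $H^{-1}(\R)$, which is exactly \eqref{eq:Palais-Smale}.

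The main point requiring care is verifying that $A$ is genuinely $C^1$ and that the mountain-pass construction applies despite the non-smooth, non-autonomous nonlinearity built from $g$. The function $g$ is only continuous (it is the inverse of $\tilde f$ on $]-\infty,0[$ extended by zero), with square-root behavior $g(s)\sim -s^{1/2}/\sqrt a$ at the origin by \eqref{eq:lim_g(s)2}; consequently $G(s)\sim -\tfrac{2}{3}s^{3/2}/\sqrt a$ near $0$, so the composed term $\int G(\alpha(U^+)^2)$ behaves like $\int |U|^3$ and is indeed differentiable with the asserted derivative $-g(\alpha(U^+)^2)\alpha U^+$ (the apparent singularity of $g'$ at $0$ is harmless because it is integrated against the vanishing factor $U^+$). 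I would justify the $C^1$ regularity through the growth bound \eqref{eq:g(s)} and dominated convergence, exactly the estimates already used in Lemma \ref{lemma:minimization_M3}. I do \emph{not} attempt to prove the Palais--Smale condition here — that is deferred to the compactness analysis that recovers an actual critical point from $(U_n)$; at this stage I only need the existence of the sequence, which the mountain pass geometry guarantees unconditionally.
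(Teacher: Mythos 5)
Your proposal is correct and follows essentially the same route as the paper: you verify the mountain pass geometry of $A$ (namely $A(0)=0$, positivity on a small sphere via the growth bound on $G$ and the Sobolev embeddings $H^1(\R)\hookrightarrow L^{8/3},L^3,L^4$, and $A(tw)\to-\infty$ along a ray through a positive $w$) and then invoke a version of the Mountain Pass Theorem that does not assume the Palais--Smale condition, exactly as the paper does with \cite[Theorem 1.15]{MW}, to obtain the sequence at the positive min-max level. Your minor variations (using $G\leq 0$ together with the quartic term for the direct proof that $A(tw)\to-\infty$, and the explicit check that $A$ is $C^1$) are sound refinements of the same argument, not a different approach.
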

\begin{proof}
Recalling \eqref{eq:lim_g(s)2}, we get the existence of $c_3, c_4>0$, depending on $a$ and $b$, such that for every $s>0$
\begin{equation}\label{eq:g(s)_extra}
 g(s) \geq -c_3(s^{1/2}+s^{1/3}),\qquad   G(s)\geq -c_4(s^{3/2}+s^{4/3}).
\end{equation}

Let us check that the functional $A$ satisfies all the assumptions of the Mountain Pass Lemma (we will use the version from  \cite[Theorem 1.15]{MW}, which does not require that $A$ satisfies the Palais-Smale condition, and whose conclusion is precisely \eqref{eq:Palais-Smale}):
\begin{itemize}
\item $A(0)=0$
\item We have, denoting by $S_p$ the best Sobolev constant of the continuous embedding $H^1(\R)\hookrightarrow L^p(\R)$ and using \eqref{eq:g(s)_extra}:
\[
\begin{split}
A(U) \geq &\frac{\min \{1,\alpha\rho\}}{2} \|U\|_{H^1}^2 -\frac{c_3}{2}\int (\alpha^{3/2}|U|^3+\alpha^{4/3}|U|^{8/3})-\frac{1}{4}\int U^4\\
	\geq &\frac{\min \{1,\alpha\rho\}}{2} \|U\|_{H^1}^2 \\
			&-\frac{c_3}{2}\left( \alpha^{3/2}S_3^3 \|U\|_{H^1}^3 +\alpha^{4/3} S_{8/3}^{8/3} \|U\|^{8/3}_{H^1}\right)-\frac{S_4^4}{4}\|U\|^4_{H^1}\\
\end{split}
\]
and thus there exists $\varepsilon>0$ small (depending on $\alpha$), we have
\[
\inf_{\|U\|_{H^1}=\varepsilon}A(U)>0.
\]
\item Let $w\in H^1(\R)$ be a positive function. Then, by reasoning exactly as in point 1) of the proof of Lemma \ref{lemma:minimization_M3}, we deduce that
\[
A(tw)\to -\infty \qquad \text{ as } t\to +\infty.
\]
In conclusion, there exists $\bar U\in H^1 (\R)$ with $\|\bar U\|_{H^1}>\varepsilon$ such that $A(\bar U)<0$.
\end{itemize}
Thus, \cite[Theorem 1.15]{MW} applies, yielding the existence of a sequence $(U_n)\subset H^1(\R)$ satisfying \eqref{eq:Palais-Smale}.
\end{proof}

\begin{Lemma}\label{lemma:positivesolution} Equation \eqref{eq:originalequation} admits a positive solution $U\in H^1(\R)$.
\end{Lemma}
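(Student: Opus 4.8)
The plan is to combine the Palais--Smale sequence produced in Lemma \ref{lemma:PS_aux} with the compactness machinery already assembled in this section, recovering a genuine critical point of $A$ at the mountain-pass level $c>0$, and then invoking Lemma \ref{lemma:lastaux1} to upgrade it to a positive solution. First I would take the sequence $(U_n)\subset H^1(\R)$ with $A(U_n)\to c$ and $A'(U_n)\to 0$ in $H^{-1}(\R)$. The immediate task is boundedness: using the standard Nehari/Pohozaev-type combination $A(U_n)-\tfrac14 A'(U_n)[U_n]$, the quartic term cancels against a quarter of itself, and the growth bounds \eqref{eq:G(s)}--\eqref{eq:g(s)_extra} control the $G$ and $g$ contributions by subcritical powers; since $A'(U_n)[U_n]=o(\|U_n\|_{H^1})$, this yields $\tfrac14\min\{1,\alpha\rho\}\|U_n\|_{H^1}^2\leq c+o(1)+o(\|U_n\|_{H^1})$, whence $(U_n)$ is bounded in $H^1(\R)$.

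Next I would address the loss of compactness coming from the non-compactness of $H^1(\R)\hookrightarrow L^q(\R)$ and the translation invariance of the problem on the full line. Here the cleanest route is to replace $(U_n)$ at the outset by a radially decreasing minimizing-type sequence: one symmetrizes, passing to $(|U_n|^*)$, so that the sequence lives in $H^1_{rd}(\R)$ and the compact embedding \eqref{compactemb} becomes available. Care is needed because symmetrization interacts with the $G$-term and with $A'$, so I would instead argue that the abstract mountain-pass level is unchanged when the minimax is taken over paths of radially decreasing functions, exactly as the Schwarz symmetrization was used in Steps 1 and 5 of the proof of Theorem \ref{teo:GS2}-1. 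With a bounded radial Palais--Smale sequence in hand, \eqref{compactemb} gives, up to a subsequence, $U_n\rightharpoonup U$ weakly in $H^1(\R)$ and $U_n\to U$ strongly in $L^q(\R)$ for every $q>2$.

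I would then pass to the limit in the equation $A'(U_n)\to 0$. The strong $L^3$ and $L^{8/3}$ convergence, together with the domination $|g(\alpha U_n^2)|\le c_1(\alpha^{1/2}|U_n|+\alpha^{1/3}|U_n|^{2/3})$ from \eqref{eq:g(s)} and $L^4$ convergence of the cubic term, let me show $A'(U)=0$ by testing against arbitrary $\varphi\in H^1(\R)$ and using dominated convergence on each nonlinear integrand, just as in Lemma \ref{lemma:minimization_M3}. The final obstacle, and the genuinely delicate point, is to guarantee that the weak limit $U$ is nontrivial, i.e. $U\not\equiv 0$: weak limits of Palais--Smale sequences on $\R$ can vanish, in which case the critical value would be lost. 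I would rule this out precisely by having worked with radially decreasing functions, for which the strong $L^q$ ($q>2$) convergence forces $\int (U^+)^4 + \int G(\alpha(U^+)^2)$ to pass to the limit; testing $A'(U_n)[U_n]=o(1)$ and comparing with $A(U_n)\to c>0$ through \eqref{eq:A(U)_alternatize}-type identities shows that the nonlinear energy is bounded below away from zero along the sequence, so its $L^q$ limit cannot be zero, forcing $U\not\equiv 0$. Once $U$ is a nontrivial critical point, Lemma \ref{lemma:lastaux1} and the strong maximum principle deliver $U>0$, completing the proof.
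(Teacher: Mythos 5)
There are two genuine gaps in your proposal. The first is in the boundedness step: the combination $A(U_n)-\tfrac14 A'(U_n)[U_n]$ is exactly the wrong one. Computing it, the quartic terms do cancel, but what remains is
\begin{equation*}
A(U_n)-\tfrac14 A'(U_n)[U_n]=\tfrac14\int\left((U_n')^2+\alpha\rho U_n^2\right)+\int\Big(\tfrac12 G(\alpha (U_n^+)^2)-\tfrac14 g(\alpha (U_n^+)^2)\,\alpha (U_n^+)^2\Big),
\end{equation*}
and by \eqref{eq:lim_g(s)2} the bracket behaves, as $s=\alpha (U_n^+)^2\to+\infty$, like $\left(-\tfrac38+\tfrac14\right)b^{-1/3}s^{4/3}=-\tfrac18 b^{-1/3}s^{4/3}$. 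So the leftover $g,G$-contribution is \emph{negative} and of size $\alpha^{4/3}\int |U_n|^{8/3}$, which is subcritical but \emph{superquadratic}: it cannot be absorbed by the quadratic term, and the inequality you claim, $\tfrac14\min\{1,\alpha\rho\}\|U_n\|_{H^1}^2\leq c+o(1)+o(\|U_n\|_{H^1})$, simply does not follow (test with $U_n=t_n w$, $t_n\to\infty$: the right-hand side grows like $t_n^{8/3}$ and nothing closes). This is precisely why the paper uses $A(U_n)-\tfrac38 A'(U_n)[U_n]$: any coefficient strictly larger than $\tfrac14$ leaves a spare term $-\tfrac18\int (U_n^+)^4$ on the right-hand side, and after bounding the $g,G$-remainder by $C(\alpha^{3/2}+\alpha^{4/3})\int\left((U_n^+)^2+(U_n^+)^4\right)$ via \eqref{eq:g(s)} and \eqref{eq:G(s)}, one chooses $\alpha$ small so that the quartic part is absorbed by that spare term and the quadratic part by $\tfrac{1}{16}\min\{1,\rho\alpha\}\|U_n\|_{H^1}^2$. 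The smallness of $\alpha$ is essential here — it is the reason Theorem \ref{thm:auxteo1} is stated only for small $\alpha$ — and your argument never invokes it, which is a symptom of the gap.

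The second gap is the symmetrization. A Palais--Smale sequence cannot be replaced by its Schwarz symmetrization: $A'(U_n)\to 0$ gives no information on $A'(|U_n|^*)$, and your fallback — running the minimax over paths of radially decreasing functions — would require a deformation lemma inside the cone $H^1_{rd}(\R)$, which is not invariant under the (pseudo-)gradient flow of $A$; this is heavy machinery that neither the paper nor \cite{MW} provides, and it cannot be dismissed as routine. The paper avoids symmetry altogether: from $A'(U_n)[U_n]=o(1)$, $A(U_n)\to c>0$ and \eqref{eq:g(s)} one sees that $U_n\not\to 0$ in $L^4(\R)$; Lions' concentration lemma (Lemma 1.21 in \cite{MW}) then yields $R,l>0$ and points $x_n$ with $\int_{B_R(x_n)}U_n^4\geq l$, and since $A$ is translation invariant, $V_n:=U_n(\cdot-x_n)$ is again a Palais--Smale sequence whose weak limit $V$ satisfies $\int_{B_R(0)}V^4\geq l>0$, hence $V\not\equiv 0$; passing to the limit in $A'(V_n)\to 0$ needs only $L^p_{loc}$ compactness against test functions in $C_c^\infty(\R)$, not any global strong convergence. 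Your remaining steps (limit passage, Lemma \ref{lemma:lastaux1}, strong maximum principle) are fine, but both repairs above are needed for the proof to stand.
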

\begin{proof} Let $U_n$ be the sequence given by Lemma \ref{lemma:PS_aux}.
\smallbreak

\noindent 1) Let us check that $(U_n)$ is bounded in $H^1(\R)$. We have 
\[
\frac{1}{2}\int ((U'_n)^2+\alpha \rho U_n^2)+\frac{1}{2}\int G(\alpha (U_n^+)^2)-\frac{1}{4}\int (U_n^+)^4\leq C
\]
and
\[
\int ((U_n')^2+\alpha \rho U_n^2) +\int g(\alpha (U_n^+)^2) \alpha (U_n^+)^2 -\int (U_n^+)^4=\text{o}(\|U_n\|).
\]
By multiplying the second equation by $3/8$ and subtracting it from the first inequality, we have, using also \eqref{eq:g(s)} and \eqref{eq:G(s)}
\begin{align*}
\frac{1}{8}&\int ((U_n')^2+\alpha \rho U_n^2) \\
&\leq \, C+\text{o}(\|U_n\|)-\frac{1}{8}\int (U_n^+)^4 + \int \left(\frac{3}{8}g(\alpha (U_n^+)^2)\alpha (U_n^+)^2-\frac{1}{2}G(\alpha (U_n^+)^2)\right)\\
	& \leq \, C+\text{o}(\|U_n\|_{H^1(\R)})-\frac{1}{8}\int (U_n^+)^4  + C \int (\alpha^{3/2}+\alpha^{4/3})\left((U_n^+)^2+(U_n^+)^4\right)
\end{align*}
for some $C$ depending only on $c_3$ (and, thus, only on $a$ and $b$). Therefore, 	
\begin{align*}
\frac{\min\{1,\rho\alpha\}}{8}\| U_n\|^2_{H^1} \leq & C+C'\|U_n\|_{H^1} +  C'' \int (\alpha^{3/2}+\alpha^{4/3}) \|U_n\|_{H^1}^2 \\
								&  +\left(C (\alpha^{3/2}+\alpha^{4/3})-\frac{1}{8}\right) \int U_n^4.
\end{align*}
Choosing $\alpha$ sufficiently small such that $C''(\alpha^{3/2}+\alpha^{4/3}) \leq \frac{\min\{1,\rho\alpha\}}{16} $ and $C (\alpha^{3/2}+\alpha^{4/3})-\frac{1}{8}\leq 0$, we have
\[
\frac{\min\{1,\rho\alpha\}}{16}\int ((U_n')^2+\alpha \rho U_n^2) \leq C+C'\|U_n\|_{H^1},
\]
hence $(U_n)$ is a bounded sequence in $H^1(\R)$.
\smallbreak

\noindent 2) From the previous step, there exists $U\in H^1(\R)$ such that, up to a subsequence, 
\begin{align*}
& U_n\rightharpoonup U \text{ weakly in  } H^1(\R)
& U_n \to U \text{ strongly in  } L^p_{\rm loc}(\R),\ \forall p\geq 1.
\end{align*} Then, for every $\varphi\in C^\infty_{\rm c}(\R)$, we have $A'(U)[\varphi]=\lim_n A(U_n)[\varphi]=0$, and $U$ is a critical point of $A$, i.e., $A'(U)= 0$.

\smallbreak

 \noindent 3) To conclude, let us show that without loss of generality we can assume $U\not \equiv 0$. If this is true, Lemma \ref{lemma:positivesolution} follows directly from Lemma \ref{lemma:lastaux1}.  
 
 The first observation is that, since $U_n$ is bounded in $H^1(\R)$ and $A'(U_n)[U_n]=\textrm{o}(\|U_n\|_{H^1})$, then actually 
 \begin{equation}\label{eq:auxiliary_}
 A'(U_n)[U_n]=\textrm{o}(1).
 \end{equation}  
This implies that we cannot have $U_n \to 0$ in $L^4(\R)$, otherwise \eqref{eq:auxiliary_} combined with \eqref{eq:g(s)} would yield $U_n\to 0$ in $H^1(\R)$ and $A(U_n)\to 0$, contradicting the positivity of $c>0$ in Lemma \ref{lemma:PS_aux}.
Therefore $U_n\not\to U$ in $L^4(\R)$, and since $U_n$ is bounded in $H^1(\R)$ there exists $R$, $x_n\in \R$ and $l>0$ such that
 \[
 \int_{B_R(x_n)} U_n^4 \geq l>0
 \]
 (check for instance Lemma 1.21 in \cite{MW}). Thus, defining $V_n(x)=U_n(x-x_n)$, we have
 \[
 A(V_n)=A(U_n)\to c,\qquad A'(V_n)\to 0,\qquad \int_{B_R(0)} V_n^4 \geq l>0.
 \]
By repeating the previous arguments we obtain the existence of $V\in H^1(\R)$ such that $V_n\to V$ weakly in $H^1(\R)$, strongly in $L^p_{loc}(\R)$ (up to a subsequence).  Moreover, $A'(V)=0$ and $\int_{B_R(0)}V^4\geq l>0$, hence $V$ is a nontrivial critical point of $A$, hence a positive solution of \eqref{eq:originalequation} by Lemma \ref{lemma:lastaux1}.
\end{proof}

\medbreak

\begin{proof}[Conclusion of the proof of Theorem \ref{thm:auxteo1}]
Let $U\in H^1(\R)$ be the positive solution of \eqref{eq:originalequation} provided by the previous lemma. Let $V:=g(\alpha U^2)$. Then, by definition of $g$ (recall \eqref{eq:definition_g}), we have $V<0$ and $V^2-bV^3=\alpha U^2$. In particular, $(U,V)$ is a solution of \eqref{eq:boundstates4}.
\end{proof}

Having concluded the proof of Theorem \ref{thm:auxteo1}, we turn to the  existence of a solution pair with both components positive.

\begin{Teorema}\label{thm:auxteo2}  Take $\rho,a,b>0$. Then, for sufficiently small $\alpha>0$, \eqref{eq:boundstates4} admits a solution pair $(U,V)$, with $U\in H^1(\R)$ and $U>0$, $V>0$ in $\R$.
\end{Teorema}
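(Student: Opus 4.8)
The plan is to adapt the Mountain Pass scheme already carried out for Theorem \ref{thm:auxteo1}, the one genuinely new difficulty being that the positive branch $V>0$ forces us to invert $f(t)=at^2-bt^3$ on its increasing interval $[0,2a/(3b)]$, on which it takes values in $[0,s_0]$ with $s_0:=4a^3/(27b^2)$ (the maximal value of $f$ there). Consequently the pointwise relation $V=h(\alpha U^2)$, with $h:=\big(f|_{[0,2a/(3b)]}\big)^{-1}$, only makes sense while $\alpha U^2\le s_0$. I would first define $h$ on $[0,s_0]$ (continuous, positive for $s>0$, $h(0)=0$, with $h(s)\sim s^{1/2}/\sqrt a$ as $s\to0^+$) and extend it to a globally defined continuous $\bar h\ge0$ on $[0,+\infty[$ by setting $\bar h\equiv h(s_0)=2a/(3b)$ for $s>s_0$. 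Writing $\bar H(t):=\int_0^t\bar h$, one checks the same growth estimates as \eqref{eq:g(s)}--\eqref{eq:G(s)}, namely $\bar h(s)\le c_1(s^{1/2}+s^{1/3})$ and $\bar H(s)\le c_2(s^{3/2}+s^{4/3})$ for all $s\ge0$.

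Next I would study the truncated functional $\bar A(U)=\frac12\int((U')^2+\alpha\rho U^2)+\frac12\int\bar H(\alpha(U^+)^2)-\frac14\int(U^+)^4$, whose nontrivial critical points are, by the argument of Lemma \ref{lemma:lastaux1} and the strong maximum principle, positive solutions of $-U''+\alpha\rho U=U^3-\alpha\bar h(\alpha U^2)U$. Since $\bar H\ge0$, the Mountain Pass geometry is immediate: the extra term only helps near the origin and, being of quadratic growth, is dominated at infinity by $-\frac14\int(U^+)^4$. Hence \cite[Theorem 1.15]{MW} gives a Palais--Smale sequence at a level $c_\alpha>0$, and its boundedness follows exactly as in Lemma \ref{lemma:positivesolution}: computing $\bar A(U_n)-\frac38\bar A'(U_n)[U_n]$, discarding the favorable term $-\frac12\int\bar H\le0$, estimating $\int\alpha\bar h(\alpha(U_n^+)^2)(U_n^+)^2$ via the growth bounds together with $t^3,t^{8/3}\le C(t^2+t^4)$, and absorbing the remainder by taking $\alpha$ small. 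Translating so that $\int_{B_R(0)}U_n^4\ge l>0$ and invoking the compact embedding \eqref{compactemb} then yields a nontrivial nonnegative critical point $U$ of $\bar A$, which is positive by the maximum principle.

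The crux, and the step I expect to be the main obstacle, is to show that this $U$ actually solves the untruncated problem, i.e. that $\alpha U^2\le s_0$ everywhere so that $\bar h(\alpha U^2)=h(\alpha U^2)$. The point is that all the constants above can be taken uniform for $\alpha\le\alpha_0$: a fixed test function gives $c_\alpha\le C$ uniformly in $\alpha$, so the boundedness argument produces $\int(U')^2\le C$ and $\alpha\rho\int U^2\le C$, again uniformly. Crucially, this controls $\|U'\|_2$ uniformly while only allowing $\|U\|_2$ to grow like $\alpha^{-1/2}$. Feeding these into the one-dimensional interpolation $\|U\|_\infty^2\le 2\|U\|_2\|U'\|_2$ yields $\alpha\|U\|_\infty^2\le 2C\sqrt{\alpha/\rho}\to0$ as $\alpha\to0$. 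Thus, shrinking $\alpha_0$ if necessary, $\alpha\|U\|_\infty^2\le s_0$ for every $\alpha\le\alpha_0$; the truncation is inactive and $U$ solves $-U''+\alpha\rho U=U^3-\alpha h(\alpha U^2)U$.

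Finally I would set $V:=h(\alpha U^2)$. Then $V>0$ where $U>0$, and $V\to0$ at infinity since $h$ is continuous with $h(0)=0$ and $U\to0$; by definition of $h$ one has $aV^2-bV^3=\alpha U^2$ pointwise, which combined with the equation for $U$ shows that $(U,V)$ solves \eqref{eq:boundstates4} with $U>0$ and $V>0$. This proves Theorem \ref{thm:auxteo2} and, together with Theorem \ref{thm:auxteo1}, Theorem \ref{teo:GS2}-3. The only delicate quantitative input is the uniform-in-$\alpha$ bound feeding the interpolation; the rest is a direct transcription of the Mountain Pass argument already used for the sign-reversed branch.
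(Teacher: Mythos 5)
Your proposal is correct and follows the paper's overall scaffolding (invert $f$ on its increasing branch $[0,2a/(3b)]$, truncate, run a Mountain Pass argument via \cite[Theorem 1.15]{MW}, then show the truncation is inactive for small $\alpha$), but it diverges from the paper in the two places that matter. First, the paper's truncation is $h=\min\{\tilde h,\rho/2\}$, and the extra $\rho/2$ cap is what the paper uses both for Palais--Smale boundedness (via $A(U_n)-\tfrac14 A'(U_n)[U_n]\geq \tfrac14\int ((U_n')^2+\alpha(\rho-h)U_n^2)$ with $\rho-h\geq\rho/2$) and for the Fatou step; you instead keep only the cap at $2a/(3b)$ and prove boundedness as in Lemma \ref{lemma:positivesolution}, via the growth bounds \eqref{eq:g(s)}--\eqref{eq:G(s)} and absorption for small $\alpha$, which works equally well. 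Second, and more substantively, for the crux --- showing $\alpha\|U\|_\infty^2\leq 4a^3/(27b^2)$ --- the paper proves the \emph{sharp} level estimate $A(U)\leq c_{\tilde A}\leq \kappa(\tfrac{3\rho}{2}\alpha)^{3/2}$ by comparing with the auxiliary functional $\tilde A$ and its explicit $\sech$ ground state $W$ (Lemma \ref{4.9} and \eqref{estimate}), which then yields $\|U\|_\infty^2\leq C\rho\alpha$, i.e.\ $\alpha\|U\|_\infty^2=O(\alpha^2)$; you avoid the soliton comparison entirely, using only a crude $\alpha$-uniform bound $c_\alpha\leq C$ (fixed test path) together with the scaling-aware one-dimensional interpolation $\|U\|_\infty^2\leq 2\|U\|_2\|U'\|_2$, which decouples the uniformly bounded $\|U'\|_2$ from the possibly large $\|U\|_2\lesssim(\alpha\rho)^{-1/2}$ and gives $\alpha\|U\|_\infty^2=O(\sqrt{\alpha})$. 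Your route is more elementary (no explicit solution and no auxiliary functional needed), at the price of a weaker rate, which is immaterial for the qualitative statement; the paper's route gives the sharper threshold. Two small points to tighten: the compactness you invoke after translating should be the local ($L^p_{\rm loc}$) Rellich embedding, not \eqref{compactemb}, since translated Palais--Smale sequences are not radially decreasing; and the transfer of the uniform bounds $\int (U')^2\leq C$, $\alpha\rho\int U^2\leq C$ from the sequence $(U_n)$ to its weak limit $U$ needs to be said explicitly (weak lower semicontinuity of $\int (U')^2$ plus Fatou for $\int U^2$, or the paper's Fatou argument applied to $A(U_n)-\tfrac38 A'(U_n)[U_n]$, whose integrand is pointwise nonnegative for small $\alpha$); both fixes are routine.
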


In order to prove this last result, consider this time $\tilde f:=f|_{[0,\frac{2a}{3b}]}$, where we recall that $f(s)=as^2-bs^3$. This function is strictly increasing in $[0,\frac{2a}{3b}]$, hence invertible. Since $f(\frac{2a}{3b})=\frac{4a^3}{27 b^2}$,  we can take the continuous, nonnegative functions
\begin{equation}
\tilde h(s)=\begin{cases}\label{eq:auxiliaryH}
0 &  s\leq 0\\
\tilde f^{-1}(s)& 0 \leq s\leq \frac{4a^3}{27 b^2}\\
\frac{2a}{3b} & s\geq \frac{4a^3}{27 b^2}
\end{cases}; \qquad h(s)=\min \{\tilde h(s),\frac{\rho}{2}\}. 
\end{equation}
We will solve
\begin{equation}\label{eq:lastequation_extra}
-U'' + \alpha\rho U +h(\alpha U^2)\alpha U=U^3,
\end{equation}
obtaining positive solutions as critical points of the functional $A:H^1(\R)\to \R$ which now is defined by
\begin{equation}\label{def_of_A}
A(U)= \frac{1}{2}\int \left((U')^2+\rho \alpha U^2\right) +\frac{1}{2}\int H(\alpha U^2)-\frac{1}{4}\int (U^+)^4,
\end{equation}
where $H(t):=\int_0^t h(\xi)\, d\xi$. Since $0\leq h(t)\leq \min\{\frac{2a}{3b},\frac{\rho}{2}\}$, then $0\leq H(\alpha t^2)\leq \min\{\frac{2a}{3b},\frac{\rho}{2}\} \alpha t^2$ and the functional is well defined.  Observe also that, if $U>0$ is such that $\alpha U^2 \leq \frac{4a^3}{27 b^2}$, then $V:=h (\alpha U^2)>0$ and $aV^2-bV^3=\alpha U^2$.

\begin{Lemma}\label{lemma:lastaux2}  Nontrivial critical points of $A$ are positive solutions of \eqref{eq:originalequation}. 
\end{Lemma}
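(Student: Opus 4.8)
The plan is to identify the Euler--Lagrange equation of the functional $A$ defined in \eqref{def_of_A} and then upgrade an arbitrary nontrivial critical point to a positive solution. First I would compute, for $\varphi\in H^1(\R)$,
\[
A'(U)[\varphi]=\int_\R\left(U'\varphi'+\alpha\rho\, U\varphi\right)+\int_\R h(\alpha U^2)\,\alpha U\varphi-\int_\R (U^+)^3\varphi,
\]
using that $H'=h$ and that $t\mapsto\tfrac14(t^+)^4$ is $C^1$ with derivative $(t^+)^3$; the functional is $C^1$ because $h$ is bounded and continuous. Hence $A'(U)=0$ is the weak form of
\[
-U''+\alpha\rho\, U+h(\alpha U^2)\,\alpha U=(U^+)^3\quad\text{in }\R,
\]
where the quartic truncation in \eqref{def_of_A} produces $(U^+)^3$ on the right-hand side.

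The key step is to show that any nontrivial critical point is nonnegative, and here the crucial structural fact is that $h\geq 0$ (by construction $h(s)=\min\{\tilde h(s),\rho/2\}$ with $\tilde h\geq 0$), in contrast with the function $g\leq 0$ of the previous case. I would test the weak equation with $\varphi=U^-:=\max\{-U,0\}\in H^1(\R)$. Using the standard identities $\int U'(U^-)'=-\int((U^-)')^2$ and $U\,U^-=-(U^-)^2$, together with $U^+U^-=0$ (so the right-hand side term drops), this yields
\[
\int_\R ((U^-)')^2+\alpha\rho\int_\R (U^-)^2+\alpha\int_\R h(\alpha U^2)(U^-)^2=0.
\]
All three summands are nonnegative since $\rho,\alpha>0$ and $h\geq 0$, forcing $U^-\equiv 0$, i.e. $U\geq 0$. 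The strong maximum principle then gives $U>0$ in $\R$ whenever $U\not\equiv 0$.

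Once positivity is established, $(U^+)^3=U^3$ and the critical-point equation becomes
\[
-U''+\alpha\rho\, U+h(\alpha U^2)\,\alpha U=U^3,
\]
which is exactly \eqref{eq:lastequation_extra}. It is essential to make the bridge to the equation named in the statement explicit, since this is where the discrepancy lies: the functional $A$ considered here is the one \emph{redefined} in \eqref{def_of_A} for the case $a,b>0$ with $V>0$, built from the nonnegative primitive $H$ of the truncation $h$, and its critical points therefore solve the $V>0$ equation \eqref{eq:lastequation_extra}, not the $V<0$ equation \eqref{eq:originalequation} associated with the functional of Lemma \ref{lemma:lastaux1} (where $g\leq 0$ gives the opposite sign on the coupling term, so the two equations genuinely differ). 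The reference to \eqref{eq:originalequation} in the statement of Lemma \ref{lemma:lastaux2} is a copy of the statement of Lemma \ref{lemma:lastaux1} and should be read as \eqref{eq:lastequation_extra}; with this reading the conclusion is that nontrivial critical points of $A$ are positive solutions of \eqref{eq:lastequation_extra}.

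The proof involves no derivative losses or compactness issues, so the only real subtlety is the sign bookkeeping just described: the nonnegativity argument closes precisely because $h\geq 0$ renders the $U^-$-test identity sign-definite, whereas in the companion Lemma \ref{lemma:lastaux1} the term $g\leq 0$ is instead absorbed into the positive quadratic part. Making that sign distinction explicit, and correctly tracking the truncation $(U^+)^4$ through the first variation, is the point that must be handled with care.
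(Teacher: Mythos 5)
Your proposal is correct and takes essentially the same route as the paper: derive the Euler--Lagrange equation $-U''+\alpha\rho U+h(\alpha U^2)\alpha U=(U^+)^3$, test with $U^-$, use $h\geq 0$ and $\rho,\alpha>0$ to force $U^-\equiv 0$, and conclude $U>0$ by the strong maximum principle. You are also right that the statement's citation of \eqref{eq:originalequation} is a slip for \eqref{eq:lastequation_extra} (the equation actually associated with the redefined functional \eqref{def_of_A}), and your $U^-$-test identity is the corrected form of the paper's displayed one.
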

\begin{proof} If $A'(U)=0$ with $U\not\equiv 0$, then 
\[
-U''+ \alpha \rho U +h(\alpha U^2)\alpha U= (U^+)^3.
\]
Multiplying this equation by $U^-$ and integrating by parts, we obtain 
\[
\int ((U')^2+\rho \alpha (U^-)^2) + \int h(\alpha U^2)\alpha U^2=0.
\]
Since $h\geq 0$,  we have $U^-\equiv 0$ and $U\geq 0$, and the conclusion follows from the strong maximum principle. \end{proof}

\begin{Lemma}\label{lemma_criticalpointU}
There exists $U> 0$, a critical point of $A$, such that
\[
A(U)\leq c:= \inf_{\gamma\in \Gamma} \sup_{t\in [0,1]} A(\gamma(t)),
\]
where
\[
\Gamma:=\{\gamma\in C([0,1],H^1(\R)):\ \gamma(0)=0,\ A(\gamma(1))<0\}.
\]
\end{Lemma}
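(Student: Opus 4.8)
The plan is to run the Mountain Pass scheme already used for Lemma~\ref{lemma:positivesolution}, now taking advantage of the favourable sign produced by the truncation in \eqref{eq:auxiliaryH}. First I would record the variational geometry of the functional $A$ defined in \eqref{def_of_A}. Since $H\geq 0$ (because $h\geq 0$), one has $A(U)\geq \frac{\min\{1,\rho\alpha\}}{2}\|U\|_{H^1}^2-\frac{S_4^4}{4}\|U\|_{H^1}^4$, so that $A(0)=0$ and $\inf_{\|U\|_{H^1}=\varepsilon}A>0$ for $\varepsilon$ small; moreover, for any positive $w\in H^1(\R)$, the bound $0\leq H(\alpha t^2w^2)\leq M\alpha t^2w^2$ with $M:=\min\{\tfrac{2a}{3b},\tfrac{\rho}{2}\}$ shows that the quartic term dominates, hence $A(tw)\to -\infty$ as $t\to +\infty$. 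Thus \cite[Theorem 1.15]{MW} applies and yields a sequence $(U_n)\subset H^1(\R)$ with $A(U_n)\to c>0$ and $A'(U_n)\to 0$ in $H^{-1}(\R)$. Note that, unlike in Lemma~\ref{lemma:positivesolution}, none of this step requires $\alpha$ to be small.

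Next I would show that $(U_n)$ is bounded. Setting $B:=A-\tfrac14 A'[\,\cdot\,]$, a direct computation (in which the quartic terms cancel) gives
\[
B(W)=\tfrac14\int\big((W')^2+\rho\alpha W^2\big)+\int\Big(\tfrac12 H(\alpha W^2)-\tfrac14 h(\alpha W^2)\alpha W^2\Big).
\]
The decisive point is that the cut-off $h\leq \tfrac{\rho}{2}$ forces $0\leq \tfrac14 h(\alpha W^2)\alpha W^2\leq \tfrac{\rho\alpha}{8}W^2$, so, dropping $\tfrac12\int H\geq 0$, we obtain $B(W)\geq \tfrac14\int (W')^2+\tfrac{\rho\alpha}{8}\int W^2\geq c_0\|W\|_{H^1}^2$ for some $c_0>0$. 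Since $A(U_n)=c+\mathrm{o}(1)$ and $A'(U_n)[U_n]=\mathrm{o}(\|U_n\|_{H^1})$, this gives $c_0\|U_n\|_{H^1}^2\leq B(U_n)=c+\mathrm{o}(1)+\mathrm{o}(\|U_n\|_{H^1})$, whence $(U_n)$ is bounded; this is precisely the role of the truncation at $\rho/2$ in \eqref{eq:auxiliaryH}.

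I would then recover a nontrivial critical point by a concentration argument. If $U_n\to 0$ in $L^4(\R)$, then $A'(U_n)[U_n]=\mathrm{o}(1)$ together with $h\geq 0$ forces $\|U_n\|_{H^1}\to 0$ and $A(U_n)\to 0$, contradicting $c>0$; hence vanishing is excluded and, by Lemma~1.21 in \cite{MW}, there exist $R>0$, $(x_n)\subset\R$ and $l>0$ with $\int_{B_R(x_n)}U_n^4\geq l$. Translating, $V_n:=U_n(\cdot-x_n)$ is again a bounded Palais--Smale sequence (as $A$ is translation invariant), so up to a subsequence $V_n\rightharpoonup V$ in $H^1(\R)$, $V_n\to V$ in $L^p_{loc}(\R)$ and a.e.; passing to the limit in $A'(V_n)=\mathrm{o}(1)$ gives $A'(V)=0$, while $\int_{B_R(0)}V^4\geq l>0$ ensures $V\not\equiv 0$. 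By Lemma~\ref{lemma:lastaux2}, $V>0$ solves \eqref{eq:lastequation_extra}, and I set $U:=V$.

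It remains to establish $A(U)\leq c$, which I regard as the main difficulty because of the loss of compactness on $\R$. Since $A'(V)=0$ we have $A(V)=B(V)$, so it suffices to prove that $B$ is weakly lower semicontinuous. The quadratic part $\tfrac14\int(W')^2+\tfrac{\rho\alpha}{4}\int W^2$ is weakly lsc, and the remaining integrand $\Phi(s):=\tfrac12 H(\alpha s^2)-\tfrac14 h(\alpha s^2)\alpha s^2$ is nonnegative: writing $t=\alpha s^2$ and substituting $t=\tilde f(\sigma)$ with $\sigma=\tilde f^{-1}(t)\in[0,\tfrac{2a}{3b}]$, one finds $H(t)=\tfrac{2a}{3}\sigma^3-\tfrac{3b}{4}\sigma^4$ and hence $2H(t)-t\,h(t)=\sigma^3\big(\tfrac{a}{3}-\tfrac{b}{2}\sigma\big)\geq 0$ for $\sigma\leq\tfrac{2a}{3b}$, a nonnegativity that persists on the truncated range (where $h$ is constant, $2H(t)-t\,h(t)$ has positive derivative and nonnegative initial value). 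Therefore Fatou's lemma applies along $V_n\to V$ a.e., and adding the three lower-semicontinuity estimates gives $A(V)=B(V)\leq \liminf B(V_n)=\lim\big(A(V_n)-\tfrac14 A'(V_n)[V_n]\big)=c$, which finishes the proof. The delicate ingredient is thus the structural inequality $2H\geq t\,h(t)$, i.e.\ $\Phi\geq 0$, which is a specific feature of the flux $f(s)=as^2-bs^3$ and its truncated inverse.
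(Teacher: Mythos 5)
Your proposal is correct and follows the same overall scheme as the paper: verify the mountain-pass geometry of $A$ (using $H\geq 0$ for the lower bound near $0$ and $H(\alpha t^2)\leq \frac{\rho}{2}\alpha t^2$ for $A(tw)\to-\infty$), invoke \cite[Theorem 1.15]{MW} to get a Palais--Smale sequence at level $c>0$, prove boundedness via $A-\frac{1}{4}A'[\,\cdot\,]$ exploiting the truncation $h\leq\frac{\rho}{2}$, rule out vanishing and translate to obtain a nontrivial critical point $U>0$ (Lemma \ref{lemma:lastaux2}), and finally pass to the limit with Fatou's lemma to get $A(U)\leq c$. The one place where you genuinely deviate is the last step, and your variant is in fact cleaner than the paper's: the paper works with $A-\frac{3}{8}A'[\,\cdot\,]$, whose integrand involves $(\rho-3h(\alpha U_n^2))\alpha U_n^2$ (the displayed coefficient $\frac34$ there appears to be a typo for $3$), and its nonnegativity is only asserted ``for $\alpha$ sufficiently small'' without proof --- and smallness of $\alpha$ does not obviously help, since $\alpha$ enters only through $\alpha U_n^2$. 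You instead use $B=A-\frac{1}{4}A'[\,\cdot\,]$, in which the quartic terms cancel, and reduce everything to the pointwise structural inequality $2H(t)\geq t\,h(t)$, which you verify explicitly via the substitution $t=\tilde f(\sigma)$: your formulas $H=\frac{2a}{3}\sigma^3-\frac{3b}{4}\sigma^4$ and $2H-th=\sigma^3\left(\frac{a}{3}-\frac{b}{2}\sigma\right)\geq 0$ for $\sigma\leq\frac{2a}{3b}$ are correct, as is the extension to the range where $h$ is constant. This buys you a fully rigorous limit step with no smallness condition on $\alpha$ at that stage, and it can be read as a repair of the vaguest point in the paper's own argument; everything else in your proof matches the paper's line by line.
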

\begin{proof}
Similarly to the proof of Lemma \ref{lemma:PS_aux}, let us check that the functional \eqref{def_of_A} satisfies the assumptions of \cite[Theorem 1.15]{MW}:
\begin{itemize}
\item $A(0)=0$
\item Since $H\geq 0$, we have
\[
A(U) \geq \frac{\min \{1,\alpha\rho\}}{2} \|U\|_{H^1}^2 -C\|U\|^4_{H^1}
\]
and thus, for $\varepsilon>0$ small, 
\[
\inf_{\|U\|_{H^1}=\varepsilon}A(U)>0.
\]
In particular, since $c\geq \inf_\mathcal{N} A$, this implies that $c> 0$.
\item Taking a positive function $w\in H^1(\R)$ we have, since $H(\alpha t^2)\leq \frac{\rho}{2} \alpha t^2$,
\[
A(tw)\leq \frac{t^2}{2}\int ((w')^2 + \frac{3\rho}{2}\alpha w^2) - \frac{t^4}{4}\int w^4\to -\infty
\]
as $t\to +\infty$.
\end{itemize}
Thus, \cite[Theorem 1.15]{MW} implies the existence of a sequence $(U_n)\subseteq H^1(\R)$ such that
\[
A(U_n)\to c>0,\qquad A'(U_n)\to 0.
\]
This sequence is bounded in $H^1(\R)$, since
\begin{align*}
C + \textrm{o}(\|U_n\|_{H^1}) &\geq A(U_n)-\frac{1}{4}A'(U_n)[U_n] \\
					&= \frac{1}{4}\int ((U_n')^2+\rho\alpha U_n^2) + \frac{1}{2}\int H(\alpha U_n^2) -\frac{1}{4}\int h(\alpha U_n^2)\alpha U_n^2 \\
					&\geq  \frac{1}{4}\int \Big((U_n')^2 +\alpha \underbrace{(\rho-h(\alpha U_n^2))}_{\geq \rho/2} U_n^2\Big). 
\end{align*}
Thus $U_n\rightharpoonup U$ weakly in $H^1(\R)$ (up to a subsequence), and since $A(U_n)\to c>0$, then $U_n \not \to 0$ in $L^4(\R)$. Reasoning exactly as in the proof of Lemma \ref{lemma:positivesolution}, we can assume without loss of generality that $U\not\equiv 0$, and $A'(U)=0$. Therefore $U$ is positive as a consequence of Lemma \ref{lemma:lastaux2}.

Finally,  since
\[
\begin{split}
A(U_n) =&\, A(U_n)-\frac{3}{8}A'(U_n)[U_n]+\textrm{o}(1)\\
		=&\, \frac{1}{8}\int \left((U_n')^2+(\rho-\frac{3}{4} h(\alpha U_n^2))\alpha U_n^2 \right) +\frac{1}{2}\int H(\alpha U_n^2)\\
			&+\frac{1}{8}\int (U_n^+)^4 + \textrm{o}(1)
\end{split}
\]
and all integrands are nonnegative for $\alpha$ sufficiently small, from Fatou's lemma  conclude that
\[
\begin{split}
c &=\lim_n A(U_n) \geq \liminf \left( A(U_n)-\frac{3}{8}A'(U_n)[U_n]+\textrm{o}(1)\right) \\
	&\geq \liminf ( A(U)-\frac{3}{8}A'(U)[U])=A(U). \qedhere
\end{split}
\]

\end{proof}

Up to this point, we have obtained a positive solution of the equation \eqref{eq:lastequation_extra}. In order to conclude the proof of Theorem \ref{thm:auxteo2}, we need to show that $\alpha U^2\leq \frac{4a^3}{27 b^2}$ for $\alpha$ small (observe that $U$ depends on $\alpha$, so this is a delicate step).
Havind that in mind, consider the auxiliary functional
\[
\tilde A(U)= \frac{1}{2}\int \left( (U')^2 + \frac{3\rho}{2}\alpha U^2\right)-\frac{1}{4}\int (U^+)^4,
\]
which satisfies:
\begin{equation}\label{eq:monotonicity}
A(U)\leq \tilde A(U),\qquad \forall U\in H^1(\R).
\end{equation}
It is classical to see (see for e.g. \cite{MW}) that $\tilde A$  admits the following (least action) critical level in $H^1(\R)$:
\[
c_{\tilde A}=\inf_{u\in H^1(\R)} \sup_{t>0} \tilde{A}(tu)= \inf_{\mathcal{N}_{\tilde A}} \tilde{A},
\]
where
\[
\mathcal{N}_{\tilde A}=\{u\in H^1(\R):\ u\neq 0,\ \tilde A'(u)u=0\},
\]
which is  achieved by a unique (up to translation) radial positive solution of 
\[
-W''+\alpha \frac{3\rho}{2}W=W^3 \text{ in } \R,\qquad W(x)\to 0 \text{ as } |x|\to \infty.
\]
This solution is explicitly known to be
\[
W(x)=\sqrt{2}\sqrt{\alpha \frac{3\rho}{2}}\, \sech(\sqrt{\alpha \frac{3\rho}{2}}\, x),
\]
so that there exists $\kappa>0$ independent of $\rho$ and $\alpha$ such that
\begin{equation}\label{estimate}
0<c_{\tilde A}=A(W)= \frac{1}{4}\int ((W')^2 +\frac{3\rho}{2}\alpha W^2) =\frac{1}{4}\int W^4 \leq \kappa \left(\frac{3\rho}{2}\alpha\right)^{3/2}
\end{equation}

\begin{Lemma}\label{4.9}
Let $U$ be the critical point obtained in Lemma \ref{lemma_criticalpointU}. Then we have $A(U)\leq c_{\tilde A}$.
\end{Lemma}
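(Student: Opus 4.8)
The plan is to combine the bound $A(U)\le c$ furnished by Lemma \ref{lemma_criticalpointU} with a comparison $c\le c_{\tilde A}$ between the two mountain--pass levels, the latter being a consequence of the pointwise domination \eqref{eq:monotonicity}. Since the desired conclusion then reads $A(U)\le c\le c_{\tilde A}$, the whole matter reduces to establishing $c\le c_{\tilde A}$.

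To obtain $c\le c_{\tilde A}$, I would exploit the explicit ground state $W$ of $\tilde A$. Because $A\le \tilde A$ and $\tilde A(tW)\to-\infty$ as $t\to+\infty$ (recall $W>0$, so the quartic term dominates), there exists $T>0$ with $A(TW)\le \tilde A(TW)<0$; hence the path $\gamma(t):=tTW$, $t\in[0,1]$, satisfies $\gamma(0)=0$ and $A(\gamma(1))<0$, i.e. $\gamma\in\Gamma$. Evaluating the mountain--pass level of $A$ along this particular competitor, and then using \eqref{eq:monotonicity} termwise, yields
\[
c\le \sup_{t\in[0,1]}A(\gamma(t))=\sup_{s\in[0,T]}A(sW)\le \sup_{s>0}\tilde A(sW).
\]

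It then remains to identify $\sup_{s>0}\tilde A(sW)$ with $c_{\tilde A}$, which is precisely the Nehari characterization recalled before the lemma. Since $W\in\mathcal{N}_{\tilde A}$ realizes $c_{\tilde A}=\inf_{u}\sup_{t>0}\tilde A(tu)$, the map $s\mapsto\tilde A(sW)$ attains its maximum at the Nehari point $s=1$, so that $\sup_{s>0}\tilde A(sW)=\tilde A(W)=c_{\tilde A}$. Chaining the inequalities gives $A(U)\le c\le c_{\tilde A}$, as claimed.

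The delicate points here are structural rather than computational: one must ensure the comparison path is admissible (guaranteed by $A\le\tilde A$ together with the coercivity in $t$ of $-\tilde A(tW)$) and that the supremum of $\tilde A$ along the ray $s\mapsto sW$ is attained at $s=1$ and equals $c_{\tilde A}$. I expect the only mild obstacle to be organizing these comparisons so that the single pointwise inequality \eqref{eq:monotonicity} propagates cleanly to the level of the two variational characterizations; no estimate beyond those already established is required.
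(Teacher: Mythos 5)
Your proposal is correct and follows essentially the same chain of inequalities as the paper, namely $A(U)\leq c\leq \sup_{t>0}A(tW)\leq \sup_{t>0}\tilde A(tW)=\tilde A(W)=c_{\tilde A}$; the only difference is that you spell out the construction of the admissible path $\gamma(t)=tTW$ making $c\leq\sup_{t>0}A(tW)$ legitimate, a detail the paper leaves implicit. No gap.
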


\begin{proof}
We have, by  \eqref{eq:monotonicity},
\[
A(U)\leq c\leq  \sup_{t>0} A(tW)\leq \sup_{t>0} \tilde A(tW)=\tilde A(W)=c_{\tilde A},
\]
and the conclusion follows.
\end{proof}

\medbreak

\begin{proof}[Conclusion of the proof of Theorem \ref{thm:auxteo2}]
Let $U$ be the critical point of $A$ obtained in Lemma \ref{lemma_criticalpointU}. Then
\[
A(U)=A(U)-\frac{1}{4}A'(U)[U] \geq  \frac{1}{4}\int ((U')^2+ \frac{\rho}{2} \alpha U^2)
\]
and, by combining \eqref{estimate} with Lemma \ref{4.9} and since the embedding $H^1(\R)\hookrightarrow L^\infty(\R)$ is continuous,
\[
\|U\|^2_\infty \leq C_1 \|U\|_{H^1}^2  \leq  \frac{C_1}{\sqrt{\rho \alpha}} \int \left((U')^2+\rho \alpha U^2\right) \leq \frac{C_2}{\sqrt{\rho \alpha}}A(U) \leq C_3 \rho \alpha,
\]
for $C_1,C_2,C_3$ independent of $a,b,\rho,\alpha$. Now choose $\alpha$ small so that
\[
\alpha \|U\|_\infty^2 \leq  C_3^2\rho \alpha \leq \frac{4a^3}{27 b^2} 
\]
and $\tilde h(\alpha U^2)\leq \frac{\rho}{2}$ (recall \eqref{eq:auxiliaryH}). Then $V:=h(\alpha U^2)={\tilde f}^{-1}(\alpha U^2)$ satisfies $aV^2-bV^3=\alpha U^2$, and $(U,V)$ is a solution of 
\eqref{eq:boundstates4}.
\end{proof}

\medbreak

\begin{proof}[Conclusion of the proof of Theorem \ref{teo:GS2}-3.] This result is a direct consequence of Theorem \ref{thm:auxteo1} and Theorem \ref{thm:auxteo1}.
\end{proof}
\noindent {\bf Acknowledgements.}  The authors are grateful to Hermano Frid and Vla\-di\-mir Ko\-no\-top for stimulating suggestions and to the anonymous referees for their insightful observations that helped to considerably improve the present paper. \newline
 J. P. Dias was supported by FCT (Portugal) project UID/MAT/\-04561/\-2013. \newline Filipe Oliveira was partially supported by the Project CEMAPRE-UID/-
 \noindent
 MULTI/00491/2013 financed by FCT/MCTES through national funds. \newline H. Tavares was partially supported by FCT (Portugal) through the projects UID/MAT/04459/2013 and UID/MAT/04561/2013 and the program \textit{Investigador FCT}, as well as by the ERC Advanced Grant 2013 n. 339958 ``Complex Patterns for Strongly Interacting Dynamical Systems - COMPAT''.

\end{document}